\date{}
\newtheorem{theorem}{Theorem}
\newtheorem{lemma}[theorem]{Lemma}
\newtheorem{proposition}[theorem]{Proposition}
\theoremstyle{definition}
\newcommand{\vdual}[2]{(#1\hspace*{.5mm},#2)}
\newcommand{\norm}[3][]{#1\|#2#1\|_{#3}}
\def\div{{\rm div}}
\newcommand{\R}{\ensuremath{\mathbb{R}}}
\newcommand{\Ii}{\ensuremath{\mathcal{I}}}
\newcommand{\Jj}{\ensuremath{\mathcal{J}}}
\newcommand{\tT}{\ensuremath{\mathcal{T}}}
\newcommand{\sS}{\ensuremath{\mathcal{S}}}
\newcommand{\OO}{\ensuremath{\mathcal{O}}}
\newcommand{\pphi}{{\boldsymbol\phi}}
\newcommand{\ppsi}{{\boldsymbol\psi}}
\newcommand{\ssigma}{{\boldsymbol\sigma}}
\title{Space-time least-squares finite elements for parabolic equations
\thanks{Supported by Conicyt Chile through projects FONDECYT 1170672 and 11170050.}}
\author{Thomas F\"uhrer$^1$ \and Michael Karkulik$^2$}
\date{%
    \small $^1$Facultad de Matem\'{a}ticas, Pontificia Universidad Cat\'{o}lica de Chile, Santiago, Chile\\%
    $^2$Departamento de Matem\'{a}tica,
    Universidad T\'{e}cnica Federico Santa Mar\'{i}a, Valpara\'{i}so, Chile\\[2ex]%
}
\begin{document}
\maketitle
\begin{abstract}

We present a space-time least squares finite element method for the heat equation.
It is based on residual minimization in $L^2$ norms in space-time of an equivalent first order system.
This implies that (i) the resulting bilinear form is symmetric and coercive and
hence any conforming discretization is uniformly stable, (ii)
stiffness matrices are symmetric, positive definite, and sparse,
(iii) we have a local a-posteriori error estimator for free.
In particular, our approach features full space-time adaptivity.
We also present a-priori error analysis on simplicial space-time meshes which are highly structured.
Numerical results conclude this work.

\bigskip
\noindent
{\em Key words}: Parabolic PDEs, space-time finite element methods, stability.\\
\noindent
{\em AMS Subject Classification}: 35K20, 65M12, 65M15, 65M60.
\end{abstract}
\section{Introduction}
By now, Galerkin finite element methods are ubiquitous for the numerical approximation of
elliptic partial differential equations. For the numerical solution of initial-boundary value problems
for parabolic partial differential equations
\begin{align*}
  \partial_t u + \mathcal{L} u &= f \text{ in } (0,T)\times \Omega,\\
  u &= 0 \text{ on } \partial\Omega,\\
  u(0) &= u_0 \text{ on } \Omega,
\end{align*}
it is then quite natural to
employ finite element methods only for the spatial part of the PDE,
and discretize the resulting system of ODEs by a time stepping method such as implicit Euler.
The derivation and error analysis of such semi-discretizations is by now standard textbook knowledge, cf.~\cite{Thomee}.
An advantage of time stepping schemes is that their oblivious nature allows for optimal storage requirements
if one is only interested in the final state. As soon as one considers problems where the entire history of the evolution problem
is of interest, such as control of PDE~\cite{CaraMS_17}, optimal control with PDE constraints~\cite{GunzburgerK_11},
or data assimilation~\cite{BurmanO_18},
this advantage becomes less beneficial. Furthermore, the flexibility of time stepping schemes with respect
to space-time local mesh refinement is very limited,
such that possible local space-time singularities prevent the optimal usage of computational resources.
Moreover, following~\cite{DouglasD_70}, quasi-optimality results such as a C\'ea's lemma are not available
for time-stepping schemes.
As was pointed out in~\cite{TantardiniV_16}, this has two mayor implications.
First, a-priori error bounds for time-stepping schemes are only of asymptotic nature and
do not cover the entire computational range as for Galerkin finite element methods. Second,
the established theory on convergence of adaptive finite element methods also relies heavily on quasi-optimality and
does therefore not carry over to time stepping schemes.
Finally, when it comes to the development of parallel solvers,
the sequentiality of time stepping schemes imposes severe difficulties.
For this and various other reasons, simultaneous space-time finite element discretizations, where
time is treated as just another spatial variable, have been proposed in recent years.
They all rely basically on the standard well-posed variational space-time formulation of parabolic
equations, cf.~\cite[Ch.~XVIII, $\mathsection$ 3]{DautrayL_92}, see also~\cite[Ch.~5]{SchwabS_09}.
This variational formulation is of Petrov--Galerkin type.
Uniform stability in the discretization parameter for pairs of discrete trial- and test-spaces
is therefore an issue. This issue turns out to be non-trivial and
might be identified as the main obstacle in obtaining a flexible space-time finite element method.
There are various works considering this problem.
Recently in~\cite{Andreev_13, Andreev_14,StevensonW_19}, using minimal residual
Petrov-Galerkin discretizations, uniform stability is obtained
for discrete spaces with non-uniform but, still, global time steps.
Another approach, taken in~\cite{Steinbach_15}, already allows for general simplicial space-time meshes,
but uniform stability was shown only with respect to a weaker, mesh-dependent norm. 
Unfortunately, uniform stability in the natural, mesh-independent energy norm
in this setting is out of reach, cf.~\cite[Remark~3.5]{StevensonW_19}.
We also mention that this approach can be extended to mesh- and degree-dependent norms in an $hp$-context~\cite{DevaudS_18}.
Then, in~\cite{SteinbachZ_18}, in the case of homogeneous initial conditions, the authors obtain an coercive Galerkin formulation
of the heat equation which involves the computation of a Hilbert type transform of test functions.

In the paper at hand, we reconsider the development of a space-time discretization for parabolic equations.
For clarity of presentation we focus exclusively on the heat equation $\mathcal{L} = -\Delta$,
although we have no reason to believe that our approach
does not carry over to general elliptic spatial differential operators of second order.
In order to effectively circumvent the problems we identified above, our method will be based on
the minimization of the space-time least-squares functional
\begin{align*}
  j(u,\ssigma) := \int_0^T \| \partial_t u - \div\, \ssigma - f \|_{L^2(\Omega)}^2 + \| \ssigma - \nabla u \|_{L^2(\Omega)}^2  \,dt
    + \|u(0) - u_0 \|_{L^2(\Omega)}^2
\end{align*}
over appropriately chosen spaces. We stress that this particular functional was already mentioned in~\cite[Ch.~9.1.4]{BochevG_09}.
Moreover, we note that a time-stepping scheme based on a
least-squares functional of the above type (but on local time slices) was developed in~\cite{StarkeM_01,StarkeM_02}.

Our motivation to consider space-time least-squares finite element schemes for solving parabolic problems are their attractive
properties:
\begin{itemize}
  \item \emph{Uniform stability:}
    The numerical method is uniformly stable for \emph{any} choice of conforming subspaces, i.e., the discrete
    $\inf$--$\sup$ constant is independent of the approximation space. 
    Particularly, this enables the use of arbitrary space-time meshes.
  \item \emph{Built-in adaptivity:}
    The least-squares functional evaluated in a discrete solution is equivalent 
    to the error between exact and discrete solution (in some norm).
    Since all norms in the functional are of $L^2$ type this allows to easily localize them into (space-time) element
    contributions which can be used to steer a standard adaptive algorithm.
  \item \emph{Symmetric, positive definite, and sparse algebraic systems:}
    The bilinear form associated with the least-squares functional is symmetric and coercive, and therefore the stiffness
    matrix of the discretized problem is symmetric and positive definite. This enables the use of standard solvers, e.g., the
    preconditioned CG method. Moreover, we avoid the use of negative order Sobolev norms, so that by using locally
    supported basis functions the resulting stiffness matrix is sparse.
\end{itemize}
In Section~\ref{sec:main} we introduce and analyze the space-time least-squares functional for a first-order
reformulation of the heat equation.
We show that the right space associated to the problem consists of pairs of functions $(u,\ssigma)$
with component $u$ in the standard energy space for parabolic problems and $\ssigma$ in $L^2$, with the additional
restriction that $\partial_t u - \div\,\ssigma$ in $L^2$.
The natural norm in this space is stronger than the standard energy norm for the heat equation.
(This is similar to least-squares methods for elliptic problems, where one assumes that $\div\,\ssigma$ is in $L^2$
instead of a negative order Sobolev space.)
Since one of our aims is also to provide an easy-to-implement numerical method, we consider in Section~\ref{sec:approximation}
one of the simplest approximation spaces, that is, piecewise affine and globally continuous functions (``low order finite element spaces'')
for both variables on a space-time mesh.
We present a-priori error analysis for simplicial space-time meshes which are uniform and highly structured.
Convergence rates are shown provided that the solution is sufficiently regular.
The final Section~\ref{sec:examples} deals with an extensive study of numerical examples for problems with
spatial domains in one respectively two dimensions.

To close the introduction we like to mention the recent works~\cite{LangerMN_16,Moore_18,KimPS_18,MNST19}.
There is also plenty of literature on time stepping methods using least-squares FEMs. For an overview we refer
to~\cite[Ch.~9]{BochevG_09}.
In our recent work~\cite{FuehrerKarkulik19} we improved the existing literature on time stepping least-squares method and showed optimal
a-priori error bounds without relying on the so-called splitting property.

\section{Sobolev and Bochner spaces}
For a bounded (spatial) Lipschitz domain $\Omega\subset\R^d$ we consider the standard Lebesgue and Sobolev spaces
$L^2(\Omega)$ and $H^k(\Omega)$ for $k\geq 1$ with the standard norms. The space $H^1_0(\Omega)$ consists of all $H^1(\Omega)$ functions
with vanishing trace on the boundary $\partial\Omega$.
We define $H^{-1}(\Omega):= H^1_0(\Omega)'$ and $H^{-1}_0(\Omega) := H^1(\Omega)'$ as topological duals
with respect to the extended $L^2(\Omega)$ scalar product $\vdual{\cdot}{\cdot}_\Omega$.

For a fixed, bounded time interval $J=(0,T)$ and a Banach space $X$ we will use the space $L^2(X)$ of functions
$f:J\rightarrow X$ which are strongly measurable with respect to the Lebesgue measure $ds$ on $\R$ and
\begin{align*}
  \| f \|_{L^2(X)}^2 := \int_J \| f(s) \|_X^2\,ds < \infty.
\end{align*}
A function $f\in L^2(X)$ is said to have a weak time-derivative $f'\in L^2(X)$, if
\begin{align*}
  \int_J f'(s)\cdot\varphi(s)\,ds = -\int_J f(s)\cdot \varphi'(s)\,ds\quad\text{ for all test functions }\varphi \in C^\infty_0(J).
\end{align*}
We then define the Sobolev-Bochner space $H^k(X)$ of functions in $L^2(X)$ whose weak derivatives $f^{(\alpha)}$ of all orders $|\alpha|\leq k$
exist, endowed with the norm
\begin{align*}
  \| f \|_{H^k(X)}^2 := \sum_{|\alpha|\leq k} \| f^{(\alpha)} \|_{L^2(X)}^2.
\end{align*}
If we denote by $C(\overline J;X)$ the space of continuous functions $f:\overline{J}\rightarrow X$
endowed with the natural norm, then we have the following well-known result, cf.~\cite[Thm.~25.5]{Wloka_87}.
\begin{lemma}\label{lem:ttrace}
  Let $X\hookrightarrow H \hookrightarrow X'$ be a Gelfand triple.
  Then, the embedding
  \begin{align*}
    L^2(X)\cap H^1(X') \hookrightarrow C(\overline J;H)
  \end{align*}
  is continuous.
\end{lemma}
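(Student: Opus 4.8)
The plan is the classical argument: first prove the quantitative bound $\|u\|_{C(\overline J;H)}\le C\,\big(\|u\|_{L^2(X)}^2+\|u'\|_{L^2(X')}^2\big)^{1/2}$ for time-smooth functions, where it follows from an integration-by-parts identity, and then transfer it to all of $L^2(X)\cap H^1(X')$ by density and completeness. Recall two structural facts about the Gelfand triple: there is a constant $C_H>0$ with $\|w\|_H\le C_H\|w\|_X$ for all $w\in X$, and the duality pairing $\dual{\cdot}{\cdot}$ of $X'$ with $X$ restricts to the inner product $(\cdot,\cdot)_H$ whenever its first argument belongs to $H$. For $u\in C^1(\overline J;X)$ the classical time-derivative coincides with the weak one, $u\in C^1(\overline J;H)$, and $t\mapsto\|u(t)\|_H^2$ is continuously differentiable with $\frac{d}{dt}\|u(t)\|_H^2=2\dual{u'(t)}{u(t)}$. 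Integrating this from $s$ to $t$ yields
\begin{align*}
  \|u(t)\|_H^2=\|u(s)\|_H^2+2\int_s^t\dual{u'(\tau)}{u(\tau)}\,d\tau,\qquad s,t\in\overline J .
\end{align*}
Choosing $s\in\overline J$ with $\|u(s)\|_H^2\le T^{-1}\int_J\|u(\tau)\|_H^2\,d\tau\le (C_H^2/T)\,\|u\|_{L^2(X)}^2$ — which is possible since the integrand is continuous — and bounding $2\int_J|\dual{u'}{u}|\le\|u'\|_{L^2(X')}^2+\|u\|_{L^2(X)}^2$ by Cauchy--Schwarz and Young's inequality, we get $\|u\|_{C(\overline J;H)}^2\le C\,\big(\|u\|_{L^2(X)}^2+\|u'\|_{L^2(X')}^2\big)$ with $C$ depending only on $C_H$ and $T$.

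Second, I would show that $C^1(\overline J;X)$ is dense in $L^2(X)\cap H^1(X')$. Using a bounded linear extension operator in the time variable — a reflection extension suffices, since only one weak time-derivative is involved — I extend a given $u$ to $\widetilde u\in L^2(\R;X)\cap H^1(\R;X')$ with $\widetilde u|_J=u$. Mollification in time, $\widetilde u_\eps:=\widetilde u*\rho_\eps$, then produces functions in $C^\infty(\R;X)$ with $\widetilde u_\eps\to\widetilde u$ in $L^2(\R;X)$ and $(\widetilde u_\eps)'=\widetilde u'*\rho_\eps\to\widetilde u'$ in $L^2(\R;X')$; restricting to $J$ gives approximants $u_\eps:=\widetilde u_\eps|_J\in C^1(\overline J;X)$ converging to $u$ in the norm of $L^2(X)\cap H^1(X')$.

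Finally, for arbitrary $u\in L^2(X)\cap H^1(X')$ I pick $u_n\in C^1(\overline J;X)$ with $u_n\to u$ in that norm. Applying the smooth-function bound to the differences $u_n-u_m$ shows that $(u_n)$ is Cauchy, hence convergent, in $C(\overline J;H)$; let $v$ be its limit. Since $u_n\to u$ in $L^2(X)\hookrightarrow L^2(H)$ as well, a subsequence converges a.e.\ on $J$, so $v=u$ a.e., and $u$ therefore has a (unique) representative in $C(\overline J;H)$; passing to the limit in the bound for $u_n$ gives the asserted continuity. The only genuinely delicate point is the density step: one needs approximants that are simultaneously time-smooth and $X$-valued and that converge in the $L^2(X)$- and the $H^1(X')$-component at once, which is exactly what the extension-plus-mollification construction delivers. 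The integration-by-parts identity displayed above — the place where the Gelfand structure is really used — is by contrast completely elementary for $C^1$ functions. (One may of course instead simply invoke \cite[Thm.~25.5]{Wloka_87}.)
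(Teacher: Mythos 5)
Your proof is correct. Note that the paper does not actually prove this lemma: it simply records it as a known result and cites \cite[Thm.~25.5]{Wloka_87}, so there is no in-paper argument to compare against. What you have written is precisely the classical proof of that cited theorem: (i) the quantitative bound for $C^1(\overline J;X)$ functions via $\frac{d}{dt}\|u(t)\|_H^2=2\langle u'(t),u(t)\rangle$, the mean-value choice of the base point $s$, and Cauchy--Schwarz/Young; (ii) density of time-smooth $X$-valued functions by reflection-extension plus mollification; (iii) transfer to the whole space by the Cauchy-sequence argument in $C(\overline J;H)$ and identification of the limit through $L^2(H)$-convergence of a subsequence a.e. All three steps are sound, and you correctly isolate the two points where care is needed: the density step (simultaneous convergence in $L^2(X)$ and $H^1(X')$, which your single even-reflection operator does deliver since only one time derivative is involved and $H^1(X')\hookrightarrow C(\overline J;X')$ prevents a jump at the endpoints) and the use of the Gelfand structure to identify $\langle u',u\rangle$ with $(u',u)_H$ for smooth $u$. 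So your argument supplies a self-contained proof where the paper relies on an external reference; this buys independence from \cite{Wloka_87} at the cost of roughly a page, and either route is acceptable here.
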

We will have to interchange spatial and temporal derivatives in Bochner spaces.
To that end, we will employ the following lemma.
\begin{lemma}\label{lem:schwarz}
  Let $L:X\rightarrow Y$ be a linear and bounded operator between two Banach spaces $X$ and $Y$
  and $u\in H^1(X)$. Then it holds $Lu \in H^1(Y)$ and $(Lu)' = L(u')$.
\end{lemma}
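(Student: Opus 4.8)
The plan is to verify the three assertions in turn: that $Lu\in L^2(Y)$, that $L(u')\in L^2(Y)$, and that the latter is the weak time-derivative of the former. The first two are immediate from the boundedness of $L$. Since $u$ and $u'$ are strongly measurable $X$-valued functions and $L$ is continuous, the compositions $Lu$ and $L(u')$ are strongly measurable $Y$-valued functions; and from the pointwise bound $\|Lu(s)\|_Y\le\|L\|\,\|u(s)\|_X$ (and the analogous one for $L(u')$) we obtain $\|Lu\|_{L^2(Y)}\le\|L\|\,\|u\|_{L^2(X)}<\infty$ together with $\|L(u')\|_{L^2(Y)}\le\|L\|\,\|u'\|_{L^2(X)}<\infty$. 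Hence $Lu,L(u')\in L^2(Y)$.

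For the identification of the weak derivative I would fix an arbitrary test function $\varphi\in C_0^\infty(J)$ and start from the defining identity for $u'$,
\begin{align*}
  \int_J u'(s)\varphi(s)\,ds = -\int_J u(s)\varphi'(s)\,ds,
\end{align*}
which is an equality between elements of $X$. The crucial step is that a bounded linear operator may be pulled inside a Bochner integral: for any $g\in L^1(X)$ one has $L\int_J g(s)\,ds = \int_J Lg(s)\,ds$ (the standard commutation property of the Bochner integral, e.g.\ a consequence of Hille's theorem). Applying $L$ to both sides of the displayed identity and using the homogeneity of $L$ in the scalar factors $\varphi(s)$ and $\varphi'(s)$ yields
\begin{align*}
  \int_J L(u'(s))\varphi(s)\,ds = -\int_J L(u(s))\varphi'(s)\,ds.
\end{align*}
Since $\varphi\in C_0^\infty(J)$ was arbitrary, this is precisely the statement that $L(u')$ is the weak time-derivative of $Lu$; combined with $Lu,L(u')\in L^2(Y)$ from the first step this gives $Lu\in H^1(Y)$ and $(Lu)'=L(u')$.

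The only point requiring genuine care — and the one I would regard as the \emph{main obstacle}, although it is classical — is the justification that $L$ commutes with the Bochner integral, together with the slightly more elementary fact that $Lu$ is again strongly measurable; both rest on standard Bochner-space theory, and I would cite it rather than reprove it. Everything else is bookkeeping with the scalar test functions. As a side remark, iterating the argument (equivalently, applying it to all weak derivatives up to order $k$) immediately gives the analogous statement $Lu\in H^k(Y)$ with $(Lu)^{(\alpha)}=L(u^{(\alpha)})$ for all $|\alpha|\le k$, should that be needed elsewhere.
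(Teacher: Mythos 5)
Your proposal is correct and follows essentially the same route as the paper: the key step in both is pulling the bounded operator $L$ through the Bochner integral (Hille's theorem, cited in the paper from Yosida) in the defining identity of the weak derivative against test functions $\varphi\in C_0^\infty(J)$. Your additional verification that $Lu$ and $L(u')$ actually lie in $L^2(Y)$ is a small completeness point the paper leaves implicit, but the argument is the same.
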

\begin{proof}
  It is well known, cf.~\cite[V.5, Cor~2]{Yosida} that if $f:J\rightarrow X$ is Bochner integrable, then $Lf: J\rightarrow Y$ is Bochner integrable
  and
  \begin{align*}
    L \int_J f(t)\,dt = \int_J (L f)(t)\,dt.
  \end{align*}
  For $u\in H^1(X)$, we calculate for any $\varphi\in C_0^\infty(J)$
  \begin{align*}
    \int_J(Lu)(t) \varphi'(t)\,dt = L \int_J u(t) \varphi'(t)\,dt = - L \int_J u'(t)\varphi(t)\,dt = -\int_J L(u')(t)\varphi(t)\,dt.
  \end{align*}
\end{proof}
\section{Least-squares formulation of the heat equation}\label{sec:main}
Let $\Omega\subset\R^d$ be a bounded (spatial) Lipschitz domain and $J=(0,T)$ a given finite time interval.
For two functions $f\in L^2(L^2(\Omega))$, and $u_0\in L^2(\Omega)$
we consider the problem to find $u\in L^2(H^1_0(\Omega))\cap H^1(H^{-1}(\Omega))$
such that
\begin{align}\label{eq:model}
  \begin{split}
  \partial_t u -\Delta u &= f\text{ in } J\times \Omega,\\
  u &= 0 \text{ on } J\times \partial\Omega,\\
  u(0) &= u_0 \text{ on } \Omega.
  \end{split}
\end{align}
It is important to mention that problem~\eqref{eq:model} is well posed even if $f\in L^2(H^{-1}(\Omega))$,
cf.~\cite[Thm.~23.A]{Zeidler_90}:
\begin{proposition}\label{prop:regularitymodel}
  Let $f\in L^2(H^{-1}(\Omega))$, $u_0\in L^2(\Omega)$. Then, the solution of~\eqref{eq:model} enjoys the stability
  estimate
  \begin{align*}
    \norm{u}{L^2(H_0^1(\Omega))\cap H^1(H^{-1}(\Omega))} \lesssim \norm{f}{L^2(H^{-1}(\Omega))} +
      \norm{u_0}{L^2(\Omega)}.
  \end{align*}
\end{proposition}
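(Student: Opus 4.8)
The plan is to establish this by the classical Galerkin method combined with energy estimates, which is precisely the content of \cite[Thm.~23.A]{Zeidler_90}; I only indicate the structure. First I would pass to the weak formulation: seek $u\in L^2(H^1_0(\Omega))\cap H^1(H^{-1}(\Omega))$ with $u(0)=u_0$ such that
\begin{align*}
  \vdual{\partial_t u(t)}{v}_\Omega + \vdual{\nabla u(t)}{\nabla v}_\Omega = \vdual{f(t)}{v}_\Omega
    \qquad\text{for all }v\in H^1_0(\Omega)\text{ and a.e. }t\in J,
\end{align*}
where $\vdual{\cdot}{\cdot}_\Omega$ denotes the extension of the $L^2(\Omega)$ scalar product to the duality pairing between $H^{-1}(\Omega)$ and $H^1_0(\Omega)$. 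By Lemma~\ref{lem:ttrace} applied to the Gelfand triple $H^1_0(\Omega)\hookrightarrow L^2(\Omega)\hookrightarrow H^{-1}(\Omega)$, every element of $L^2(H^1_0(\Omega))\cap H^1(H^{-1}(\Omega))$ has a representative in $C(\overline J;L^2(\Omega))$, so that the initial condition is meaningful.

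Next, I would fix the $L^2(\Omega)$-orthonormal basis $(w_k)_{k\in\N}$ of eigenfunctions of the Dirichlet Laplacian on $\Omega$; since these are also orthogonal in $H^1_0(\Omega)$, the $L^2(\Omega)$-orthogonal projection $P_m$ onto $V_m:=\mathrm{span}\{w_1,\dots,w_m\}$ is bounded by $1$ on both $L^2(\Omega)$ and $H^1_0(\Omega)$, uniformly in $m$. Let $u_m(t)=\sum_{k=1}^m c_k(t)\,w_k\in V_m$ solve the finite-dimensional linear ODE system obtained by testing the weak form only against $w_1,\dots,w_m$, with $u_m(0)=P_m u_0$; this is uniquely solvable by standard ODE theory. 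Testing this system with $u_m(t)$ and using $\vdual{f(t)}{u_m(t)}_\Omega\le\norm{f(t)}{H^{-1}(\Omega)}\norm{u_m(t)}{H^1_0(\Omega)}$ together with Young's inequality (and Gronwall's inequality, should one later wish to treat a general second-order operator $\mathcal{L}$) yields, after integration in $t$, the bound
\begin{align*}
  \norm{u_m}{L^\infty(L^2(\Omega))}^2 + \norm{u_m}{L^2(H^1_0(\Omega))}^2
    \lesssim \norm{f}{L^2(H^{-1}(\Omega))}^2 + \norm{u_0}{L^2(\Omega)}^2
\end{align*}
uniformly in $m$. For the time derivative, any $v\in H^1_0(\Omega)$ satisfies
\begin{align*}
  \vdual{\partial_t u_m(t)}{v}_\Omega = \vdual{\partial_t u_m(t)}{P_m v}_\Omega
    = \vdual{f(t)}{P_m v}_\Omega - \vdual{\nabla u_m(t)}{\nabla P_m v}_\Omega,
\end{align*}
so the uniform $H^1_0(\Omega)$-boundedness of $P_m$ gives $\norm{\partial_t u_m}{L^2(H^{-1}(\Omega))}\lesssim\norm{f}{L^2(H^{-1}(\Omega))}+\norm{u_0}{L^2(\Omega)}$, again uniformly in $m$.

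Finally, using weak compactness of bounded sets in the reflexive space $L^2(H^1_0(\Omega))\cap H^1(H^{-1}(\Omega))$ I extract a subsequence $u_m\rightharpoonup u$ there; passing to the limit in the Galerkin equations (exploiting density of $\bigcup_m V_m$ in $H^1_0(\Omega)$) shows that $u$ solves the weak formulation, and the usual integration by parts against smooth test functions vanishing at $t=T$ identifies $u(0)=u_0$. The claimed stability estimate is then inherited from the uniform bounds above via weak lower semicontinuity of the norm, and uniqueness follows from linearity together with the energy estimate applied to $f=0$, $u_0=0$. The points that genuinely require care — and are the reason one works with the eigenbasis rather than an arbitrary Galerkin basis — are the treatment of the duality pairing $\vdual{f(t)}{\cdot}_\Omega$ for $f\in L^2(H^{-1}(\Omega))$ (rather than $L^2(L^2(\Omega))$) and the uniform $H^1_0(\Omega)$-stability of the projections $P_m$ needed to control $\partial_t u_m$; the remaining steps are routine.
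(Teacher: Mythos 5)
The paper gives no proof of this proposition at all---it is quoted directly from \cite[Thm.~23.A]{Zeidler_90}---and your argument is precisely the standard Faedo--Galerkin/energy-method proof underlying that citation. The structure is sound: choosing the Dirichlet eigenbasis correctly secures the uniform $H^1_0(\Omega)$-stability of the projections $P_m$ needed for the $\partial_t u_m$ bound, and the remaining compactness, limit-passage, and uniqueness steps are routine, so nothing essential is missing.
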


However if $f\in L^2(L^2(\Omega))$, then there holds the additional regularity $u\in L^2(H_0^1(\Omega))\cap H^1(L^2(\Omega))$.
For $f\in L^2(L^2(\Omega))$, $u_0\in L^2(\Omega)$ we define the least-squares functional
\begin{align}\label{eq:j}
  j(v,\ppsi) := \norm{\ppsi-\nabla v}{L^2(J\times\Omega)}^2 + \norm{\partial_t v-\div\,\ppsi-f}{L^2(J\times \Omega)}^2
  + \norm{v(0)-u_0}{L^2(\Omega)}^2.
\end{align}
The solution $u$ of~\eqref{eq:model} satisfies $j(u,\nabla u)=0$. 
The minimization of the functional $j$ then gives rise to the bilinear form
\begin{align*}
  b(u,\ssigma;v,\ppsi) := \vdual{\nabla u-\ssigma}{\nabla v-\ppsi}_{J\times\Omega} +
  \vdual{\partial_tu-\div\,\ssigma}{\partial_t v-\div\,\ppsi}_{J\times\Omega}
  +\vdual{u(0)}{v(0)}_{\Omega}.
\end{align*}
and the linear functional
\begin{align*}
  \ell(v,\ppsi) := \vdual{f}{\partial_tv-\div\,\ppsi}_{J\times\Omega} + \vdual{u_0}{v(0)}_{\Omega}.
\end{align*}
Define the Hilbert space
\begin{align*}
  U :=
  \left\{ (v,\ppsi)\mid v \in L^2(H^1_0(\Omega))\cap H^1(H^{-1}(\Omega)), \ppsi\in L^2(J\times\Omega), \partial_t v -\div\,\ppsi\in L^2(J\times\Omega) \right\}
\end{align*}
with its natural norm
\begin{align*}
  \| (v,\ppsi) \|_U^2 := \| v \|_{L^2(H^1_0(\Omega))}^2 + \| v \|_{H^1(H^{-1}(\Omega))}^2 + \| \ppsi \|_{L^2(J\times\Omega)}^2
  + \| \partial_t v -\div\,\ppsi \|_{L^2(J\times\Omega)}^2.
\end{align*}
Consider the problem to
\begin{align}\label{eq:ls}
  \text{find } (u,\ssigma)\in U \text{ such that } b(u,\ssigma;v,\ppsi) = \ell(v,\ppsi)\quad\text{ for all } (v,\ppsi)\in U.
\end{align}
The solution $u$ of~\eqref{eq:model} and $\ssigma:=\nabla u$ solve~\eqref{eq:ls}.
We can show the following.
\begin{lemma}\label{lem:ell}
  The bilinear form $b$ is bounded and coercive on $U$, and the linear functional $\ell$ is bounded on $U$.
\end{lemma}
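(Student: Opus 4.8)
The plan is to verify the three assertions — boundedness of $b$, boundedness of $\ell$, and coercivity of $b$ — in that order, with coercivity being the only nontrivial point. Boundedness of $b$ is immediate from Cauchy--Schwarz: each of the three terms in $b(u,\ssigma;v,\ppsi)$ is an $L^2$ inner product (over $J\times\Omega$ or over $\Omega$) of quantities that are each controlled by $\|(u,\ssigma)\|_U$ and $\|(v,\ppsi)\|_U$ respectively. Here one uses that $\|\nabla v\|_{L^2(J\times\Omega)}\le\|v\|_{L^2(H^1_0(\Omega))}$, that $\|\partial_t v-\div\,\ppsi\|_{L^2(J\times\Omega)}$ appears verbatim in the $U$-norm, and that the trace term $\|v(0)\|_{L^2(\Omega)}$ is controlled via Lemma~\ref{lem:ttrace} applied to the Gelfand triple $H^1_0(\Omega)\hookrightarrow L^2(\Omega)\hookrightarrow H^{-1}(\Omega)$, so that $L^2(H^1_0(\Omega))\cap H^1(H^{-1}(\Omega))\hookrightarrow C(\overline J;L^2(\Omega))$. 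Boundedness of $\ell$ follows the same way, using $f\in L^2(L^2(\Omega))$ and $u_0\in L^2(\Omega)$ together with the same trace estimate.

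For coercivity we must bound $\|(v,\ppsi)\|_U^2$ by $b(v,\ppsi;v,\ppsi)=\|\ppsi-\nabla v\|_{L^2(J\times\Omega)}^2+\|\partial_t v-\div\,\ppsi\|_{L^2(J\times\Omega)}^2+\|v(0)\|_{L^2(\Omega)}^2$. Two of the four $U$-norm contributions are trivial: $\|\partial_t v-\div\,\ppsi\|_{L^2(J\times\Omega)}^2$ is literally the second term of $b$, and $\|\ppsi\|_{L^2(J\times\Omega)}\le\|\ppsi-\nabla v\|_{L^2(J\times\Omega)}+\|\nabla v\|_{L^2(J\times\Omega)}$, so once $\|v\|_{L^2(H^1_0(\Omega))}$ is controlled, so is $\|\ppsi\|_{L^2(J\times\Omega)}$. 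Thus the crux is to estimate $\|v\|_{L^2(H^1_0(\Omega))}^2+\|v\|_{H^1(H^{-1}(\Omega))}^2$. For this I would introduce the auxiliary data $\tilde f:=\partial_t v-\div\,\ppsi\in L^2(H^{-1}(\Omega))$ (noting $\div\,\ppsi\in L^2(H^{-1}_0(\Omega))\subset L^2(H^{-1}(\Omega))$ since $\ppsi\in L^2(J\times\Omega)$) and $\tilde u_0:=v(0)\in L^2(\Omega)$, and observe that $v$ is the unique solution of the heat equation~\eqref{eq:model} with right-hand side $\tilde f$ and initial datum $\tilde u_0$ — because $v\in L^2(H^1_0(\Omega))\cap H^1(H^{-1}(\Omega))$ satisfies $\partial_t v-\Delta v=\tilde f+\div(\ppsi-\nabla v)$ is not quite the right identity, so more precisely: $v$ solves $\partial_t v-\Delta v = \tilde f - \div(\nabla v - \ppsi)$ in $L^2(H^{-1}(\Omega))$, with initial value $\tilde u_0$. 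Applying the stability estimate of Proposition~\ref{prop:regularitymodel} to this problem gives
\begin{align*}
  \|v\|_{L^2(H^1_0(\Omega))\cap H^1(H^{-1}(\Omega))} \lesssim \|\tilde f\|_{L^2(H^{-1}(\Omega))} + \|\div(\nabla v-\ppsi)\|_{L^2(H^{-1}(\Omega))} + \|\tilde u_0\|_{L^2(\Omega)}.
\end{align*}
Now $\|\tilde f\|_{L^2(H^{-1}(\Omega))}=\|\partial_t v-\div\,\ppsi\|_{L^2(H^{-1}(\Omega))}\le\|\partial_t v-\div\,\ppsi\|_{L^2(J\times\Omega)}$, the term $\|\div(\nabla v-\ppsi)\|_{L^2(H^{-1}(\Omega))}\le\|\nabla v-\ppsi\|_{L^2(J\times\Omega)}$ since $\div\colon L^2(\Omega)^d\to H^{-1}(\Omega)$ is bounded, and $\|\tilde u_0\|_{L^2(\Omega)}=\|v(0)\|_{L^2(\Omega)}$. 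Each of these three is bounded by $b(v,\ppsi;v,\ppsi)^{1/2}$, which completes the coercivity estimate after collecting the pieces.

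The main obstacle is the coercivity argument, and within it the point requiring care is the reduction to Proposition~\ref{prop:regularitymodel}: one must check that $v$, which a priori is only known to lie in $U$, genuinely solves a heat equation to which the cited stability estimate applies — i.e. that the identity $\partial_t v - \Delta v = (\partial_t v - \div\,\ppsi) + \div(\ppsi - \nabla v)$ holds in $L^2(H^{-1}(\Omega))$ and that the right-hand side indeed lies in that space (which it does, since $\partial_t v-\div\,\ppsi\in L^2(L^2(\Omega))\subset L^2(H^{-1}(\Omega))$ and $\ppsi-\nabla v\in L^2(J\times\Omega)$ gives $\div(\ppsi-\nabla v)\in L^2(H^{-1}(\Omega))$), and that the initial trace $v(0)$ is meaningful, which is exactly Lemma~\ref{lem:ttrace}. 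Everything else is routine application of Cauchy--Schwarz, the triangle inequality, and boundedness of the divergence operator into $H^{-1}(\Omega)$.
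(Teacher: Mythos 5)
Your proposal is correct and follows essentially the same route as the paper: the same decomposition $\partial_t v-\Delta v=(\partial_t v-\div\,\ppsi)+\div(\ppsi-\nabla v)$ in $L^2(H^{-1}(\Omega))$, the same appeal to the stability estimate of Proposition~\ref{prop:regularitymodel} with data $\partial_t v-\div\,\ppsi+\div(\ppsi-\nabla v)$ and $v(0)$, and the same triangle-inequality step to recover $\|\ppsi\|_{L^2(J\times\Omega)}$. (The momentary hesitation about the sign of the identity is moot, since $\tilde f+\div(\ppsi-\nabla v)$ and $\tilde f-\div(\nabla v-\ppsi)$ are the same expression.)
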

\begin{proof}
  Boundedness of $b$ and $\ell$ follows immediately by Cauchy-Schwarz and Lemma~\ref{lem:ttrace}.
  To show coercivity of $b$, let $(v,\ppsi)\in U$ be arbitrary.
  Then,
  \begin{align*}
    \partial_t v-\Delta v = \partial_t v -\div\,\ppsi + \div\,(\ppsi - \nabla v),
  \end{align*}
  where the right-hand side is taken in $L^2(H^{-1}(\Omega))$. Due to the well-posedness of the parabolic problem
  (Proposition~\ref{prop:regularitymodel} with $f=\partial_tv -\div\ppsi+\div(\ppsi-\nabla v)$, $u_0 = v(0)$) and obvious bounds,
  \begin{align*}
    \norm{v}{L^2(H^1_0(\Omega))\cap H^1(H^{-1}(\Omega))} &\lesssim \norm{\partial_t v -\div\,\ppsi}{L^2(H^{-1}(\Omega))}\\
    &\qquad+ \norm{\div\,(\ppsi - \nabla v)}{L^2(H^{-1}(\Omega))} + \norm{v(0)}{L^2(\Omega)}\\
    &\lesssim \norm{\partial_t v -\div\,\ppsi}{L^2(J\times\Omega)}\\
    &\qquad+ \norm{\ppsi - \nabla v}{L^2(J\times \Omega)} + \norm{v(0)}{L^2(\Omega)}.
  \end{align*}
  The triangle inequality and the last bound also yields that
  \begin{align*}
    \norm{\ppsi}{L^2(J\times\Omega)} &\leq \norm{\ppsi-\nabla v}{L^2(J\times\Omega)} + \norm{\nabla v}{L^2(J\times\Omega)}\\
    &\leq \norm{\ppsi-\nabla v}{L^2(J\times\Omega)} + \norm{v}{L^2(H^1_0(\Omega))}\\
    &\lesssim \norm{\partial_t v -\div\,\ppsi}{L^2(J\times\Omega)}
    + \norm{\ppsi - \nabla v}{L^2(J\times \Omega)} + \norm{v(0)}{L^2(\Omega)}.
  \end{align*}
  Hence
  \begin{align*}
    \norm{(u,\ppsi)}{U}^2
    &= \norm{v}{L^2(H^1_0(\Omega))\cap H^1(H^{-1}(\Omega))}^2 +
    \norm{\ppsi}{L^2(J\times\Omega)}^2 + \norm{\partial_t v - \div\,\ppsi}{L^2(J\times \Omega)}^2\\
    &\lesssim \norm{\partial_t v -\div\,\ppsi}{L^2(J\times\Omega)}^2 +
    \norm{\ppsi - \nabla v}{L^2(J\times \Omega)}^2 + \norm{v(0)}{L^2(\Omega)}^2\\
    &= b(v,\ppsi;v,\ppsi),
  \end{align*}
  which proves coercivity.
\end{proof}
The last lemma immediately implies the first main result of this work.
\begin{theorem}\label{thm:ls:wp}
  Problem~\eqref{eq:ls} is well-posed. Furthermore, if $U_h\subset U$ is a closed subspace, then the problem to
  \begin{align}\label{eq:ls:disc}
    \text{find } (u_h,\ssigma_h)\in U_h \text{ such that } b(u_h,\ssigma_h;v_h,\ppsi_h) = \ell(v_h,\ppsi_h)\quad\text{ for all } (v_h,\ppsi_h)\in U_h.
  \end{align}
  is well-posed and there holds the quasi-optimality
  \begin{align*}
    \norm{(u-u_h,\ssigma-\ssigma_h)}{U} \lesssim \min_{(w_h,\pphi_h)\in U_h} \norm{(u-w_h,\ssigma-\pphi_h)}{U}
  \end{align*}
\end{theorem}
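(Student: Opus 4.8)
The claim follows from Lemma~\ref{lem:ell} by the Lax--Milgram lemma together with the standard C\'ea argument for symmetric coercive problems; all the substance has already been done. First I would recall that $U$ equipped with $\norm{\cdot}{U}$ is a Hilbert space. By Lemma~\ref{lem:ell}, $b$ is bounded and coercive on $U$ and $\ell\in U'$, so the Lax--Milgram lemma yields a unique $(u,\ssigma)\in U$ solving~\eqref{eq:ls}; this is well-posedness of the continuous problem. (That the pair $(u,\nabla u)$ built from the solution of~\eqref{eq:model} lies in $U$ and solves~\eqref{eq:ls} was observed right after the definition of $U$, so the least-squares solution coincides with it.)

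For the discrete problem I would note that a closed subspace $U_h\subset U$ is again a Hilbert space, and the restriction of $b$ to $U_h\times U_h$ inherits boundedness and coercivity with the \emph{same} constants, while $\ell|_{U_h}$ stays bounded; hence Lax--Milgram applied on $U_h$ gives a unique $(u_h,\ssigma_h)\in U_h$ solving~\eqref{eq:ls:disc}. In particular the coercivity constant does not depend on $U_h$, which is precisely the uniform stability advertised in the introduction.

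For quasi-optimality I would use Galerkin orthogonality: subtracting~\eqref{eq:ls:disc} from~\eqref{eq:ls} and using $U_h\subset U$ gives $b(u-u_h,\ssigma-\ssigma_h;v_h,\ppsi_h)=0$ for all $(v_h,\ppsi_h)\in U_h$. Fix an arbitrary $(w_h,\pphi_h)\in U_h$. Since $(u_h-w_h,\ssigma_h-\pphi_h)\in U_h$, coercivity (constant $\alpha>0$) and boundedness (constant $C$) give
\begin{align*}
  \alpha\,\norm{(u-u_h,\ssigma-\ssigma_h)}{U}^2
  &\leq b(u-u_h,\ssigma-\ssigma_h;u-u_h,\ssigma-\ssigma_h)\\
  &= b(u-u_h,\ssigma-\ssigma_h;u-w_h,\ssigma-\pphi_h)\\
  &\leq C\,\norm{(u-u_h,\ssigma-\ssigma_h)}{U}\,\norm{(u-w_h,\ssigma-\pphi_h)}{U}.
\end{align*}
Dividing by $\norm{(u-u_h,\ssigma-\ssigma_h)}{U}$ (the estimate being trivial if this vanishes) and passing to the infimum over $(w_h,\pphi_h)\in U_h$ yields the stated bound; the infimum is in fact a minimum because $(w_h,\pphi_h)\mapsto\norm{(u-w_h,\ssigma-\pphi_h)}{U}$ is continuous and coercive on the closed set $U_h$. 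Alternatively, since $b$ is symmetric and positive definite one may identify $(u_h,\ssigma_h)$ with the $b$-orthogonal projection of $(u,\ssigma)$ onto $U_h$ and read off quasi-optimality directly, even with the sharpened constant $\sqrt{C/\alpha}$.

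There is no real obstacle here: the theorem is the abstract packaging of Lemma~\ref{lem:ell}, whose coercivity part is the genuinely nontrivial ingredient and which in turn rests on the parabolic well-posedness of Proposition~\ref{prop:regularitymodel}. The only points that require a moment's care are checking that restriction to $U_h$ preserves all constants (immediate) and that the minimum on the right-hand side is attained (coercivity of the norm on a closed subspace).
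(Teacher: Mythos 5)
Your argument is correct and is exactly the route the paper takes: Lax--Milgram via the boundedness and coercivity of Lemma~\ref{lem:ell} (which transfer verbatim to any closed subspace $U_h$), followed by the standard C\'ea argument for the quasi-optimality. The paper merely states this in one line, so your write-up is a faithful, more detailed version of the same proof.
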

\begin{proof}
  Well-posedness follows immediately from Lemma~\ref{lem:ell} and the Lax--Milgram theorem.
  The quasi-optimality result is a standard consequence.
\end{proof}
\section{Numerical approximation by finite elements}\label{sec:approximation}
Let $\tT_h$ be a simplicial and admissible partition of the space-time cylinder $J\times\Omega$.
By admissible, we mean that there are no hanging nodes. Define
\begin{align*}
  \sS^1(\tT_h) &= \left\{ u \in C(J\times\Omega) \mid u|_K \text{ a polynomial of degree at most } 1 \text{ for all
  } K\in \tT_h \right\},\\
  \sS^1_0(\tT_h) &= \left\{ u \in \sS^1(\tT_h) \mid u = 0 \text{ on } \overline J\times\partial\Omega  \right\}.
\end{align*}
Note that $\sS^1_0(\tT_h)$ is a subspace of $L^2(J;H^1_0(\Omega))\cap H^1(J;H^{-1}(\Omega))$,
and $[\sS^1(\tT_h)]^d$ is a subspace of $L^2(J\times\Omega)^d$.
Furthermore, if $v_h\in\sS^1_0(\tT_h)$ and $\ppsi_h\in[\sS^1(\tT_h)]^d$, then
$\partial_tv_h - \div\,\ppsi_h\in L^2(J\times \Omega)$.
Hence, we can define the discrete conforming subspace
\begin{align*}
  U_h = \sS^1_0(\tT_h)\times [\sS^1(\tT_h)]^d \subset U.
\end{align*}
A finite-element approximation of Problem~\eqref{eq:ls} is then given by~\eqref{eq:ls:disc}.
\subsection{A-priori convergence theory}
Due the quasi-optimality of Theorem~\ref{thm:ls:wp}, in order to provide a-priori error analysis
it suffices to analyze the approximation properties of $U_h$.
In the present section, we will show such approximation results, provided that the space-time mesh $\tT_h$ is
uniform and structured. By structured, we mean that $\tT_h$ is obtained from a tensor-product mesh $J_h\otimes\Omega_h$ via refinement
of the tensor-product space-time cylindrical elements into simplices, cf. Section~\ref{sec:spint}.
First, in Section~\ref{sec:tensorint}, we will obtain auxiliary results for space-time interpolation on the tensor product mesh $J_h\otimes\Omega_h$.
To that end, let $J_h=\{ j_1,j_2,\dots \}$ with $j_k=(t_k,t_{k+1})$ be a partition of the time interval $J$ into subintervals of length $h$,
and let $\Omega_h = \{\omega_1,\omega_2,\dots\}$ be a partition of physical space $\Omega\subset\R^d$ into $d$-simplices of diameter $h$.
Then, define the (discrete) spaces
\begin{align*}
  \sS^1(J_h;X) &:= \left\{ u\in C(J;X) \mid u|_j \text{ is affine in time with values in } X \text{ for all } j\in
  J_h \right\},\\
  \sS^1(\Omega_h) &:= \left\{ u\in C(\Omega) \mid u|_\omega \text{ a polynomial of degree at most } 1 \text{ for all }\omega\in\Omega_h \right\},\\
  \sS^1_0(\Omega_h) &:= \left\{ u \in\sS^1(\Omega_h) \mid u = 0 \text{ on } \partial\Omega \right\}.
\end{align*}
\subsubsection{Space-time interpolation on tensor product meshes}\label{sec:tensorint}
First we will show that a function $u$ can be approximated in the norm of
$L^2(H^1_0(\Omega))\cap H^1(H^{-1}(\Omega))$ to first order by
discrete functions in $\sS^1(J_h;\sS^1_0(\Omega_h))$, given that $u$ possesses some additional regularity
which is in accordance with regularity results for the heat equation.
To that end, we consider first a discretization only in time, given by the piecewise linear interpolation operator
\begin{align*}
  \Ii_h^\otimes u(t_k) := u(t_k).
\end{align*}
\begin{lemma}\label{thm:tp:lagrange}
  Let
  $\Ii_h^\otimes:C(\overline J;L^2(\Omega))\rightarrow \sS^1(J_h;L^2(\Omega))$
  be the piecewise linear interpolation operator. If $u\in H^1(X)$ for some Hilbert space $X$, it holds
  \begin{align*}
    \| u-\Ii_h^\otimes u \|_{L^2(X)} &\lesssim h \| u \|_{H^1(X)}.
  \end{align*}
  If $u\in H^2(X)$ for some Hilbert space $X$, it holds
  \begin{align*}
    \| (u-\Ii_h^\otimes u)' \|_{L^2(X)} &\lesssim h \| u \|_{H^2(X)}.
  \end{align*}
\end{lemma}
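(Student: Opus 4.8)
The plan is to reduce everything to the classical one-dimensional interpolation error estimates for piecewise linear interpolation on a uniform mesh, applied to Hilbert-space-valued functions. The key observation is that $\Ii_h^\otimes$ acts only in the time variable, so it commutes with the spatial structure encoded in $X$; concretely, on each time slab $j_k=(t_k,t_{k+1})$ the operator $\Ii_h^\otimes u$ is the affine function in $t$ (with values in $X$) that matches $u$ at the endpoints $t_k$, $t_{k+1}$. For this to make sense pointwise one uses that $H^1(X)\hookrightarrow C(\overline J;X)$ (Lemma~\ref{lem:ttrace}, or more elementarily the fundamental theorem of calculus for Bochner integrals), so that the nodal values $u(t_k)\in X$ are well-defined.

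For the first estimate, I would fix a slab $j_k$ and write, for $t\in j_k$, the exact integral remainder
\begin{align*}
  (u-\Ii_h^\otimes u)(t) = \int_{t_k}^{t_{k+1}} G_k(t,s)\, u'(s)\,ds
\end{align*}
where $G_k(t,s)$ is the (scalar) Peano kernel for linear interpolation on $j_k$, which satisfies $|G_k(t,s)|\lesssim h$ uniformly. Taking $\|\cdot\|_X$, using that $G_k$ is scalar so it pulls out, and Cauchy--Schwarz in $s$, gives $\|(u-\Ii_h^\otimes u)(t)\|_X^2 \lesssim h^2 \int_{j_k}\|u'(s)\|_X^2\,ds$; integrating over $t\in j_k$ and summing over $k$ yields $\|u-\Ii_h^\otimes u\|_{L^2(X)}\lesssim h\|u'\|_{L^2(X)}\leq h\|u\|_{H^1(X)}$. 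For the second estimate, one uses that on the interior of each slab $(\Ii_h^\otimes u)'$ is the constant $\big(u(t_{k+1})-u(t_k)\big)/h = \frac1h\int_{j_k} u'(s)\,ds$, i.e.\ the $L^2(j_k;X)$-orthogonal projection of $u'$ onto constants. Hence $(u-\Ii_h^\otimes u)' = u' - \Pi_0 u'$ on each slab, where $\Pi_0$ is the slab-wise average; since $u\in H^2(X)$ means $u'\in H^1(X)$, the same Peano-kernel / Poincaré argument applied to $u'$ in place of $u$ gives $\|u'-\Pi_0 u'\|_{L^2(j_k;X)}\lesssim h\|u''\|_{L^2(j_k;X)}$, and summing over $k$ yields $\|(u-\Ii_h^\otimes u)'\|_{L^2(X)}\lesssim h\|u''\|_{L^2(X)}\leq h\|u\|_{H^2(X)}$. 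Here one should note that differentiating $\Ii_h^\otimes u$ and identifying its weak derivative with the slabwise constant is legitimate because $\Ii_h^\otimes u$ is globally continuous and piecewise affine in time, so the jumps contribute nothing to the distributional derivative.

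The only genuinely non-routine point is bookkeeping with the Bochner/vector-valued calculus: justifying the pointwise-in-time integral remainder formula and the identity $(\Ii_h^\otimes u)'|_{j_k}=\frac1h\int_{j_k}u'$ for $X$-valued functions, rather than scalar ones. This is handled by Lemma~\ref{lem:schwarz} (one may view evaluation at $t_k$, or integration against a smooth weight, as bounded linear functionals on $X$ and reduce to the scalar statements) together with Lemma~\ref{lem:ttrace} for the continuous representative. Everything else is the standard scalar interpolation theory applied coefficientwise. I do not expect any sharp-constant or regularity-threshold subtlety: the hypotheses $u\in H^1(X)$ resp.\ $u\in H^2(X)$ are exactly what the Peano-kernel argument needs.
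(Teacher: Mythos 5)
Your proof is correct. For the first estimate it is in substance the paper's argument: on each slab $j_k$ one writes the pointwise error as an exact integral of $u'$ (the paper via the fundamental theorem of calculus for Bochner integrals, you via the equivalent Peano kernel) and applies Cauchy--Schwarz. Watch the exponents, though: with the normalization $(u-\Ii_h^\otimes u)(t)=\int_{j_k}G_k(t,s)\,u'(s)\,ds$ the kernel satisfies $|G_k|\lesssim 1$, not $\lesssim h$, and the correct pointwise bound is $\|(u-\Ii_h^\otimes u)(t)\|_X^2\lesssim h\int_{j_k}\|u'(s)\|_X^2\,ds$ (one power of $h$ from Cauchy--Schwarz); your stated intermediate bounds, taken literally, would yield the rate $h^{3/2}$ after integrating in $t$, which is false for general $u\in H^1(X)$. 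The final conclusion you state is nevertheless the right one, so this is a bookkeeping slip rather than a gap.

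For the second estimate you organize the argument differently from the paper: you identify $(\Ii_h^\otimes u)'|_{j_k}$ with the slabwise mean $\frac1h\int_{j_k}u'(s)\,ds$, so that $(u-\Ii_h^\otimes u)'$ is $u'$ minus its piecewise-constant $L^2$-projection, and then apply the Poincar\'e-type bound to $u'\in H^1(X)$. The paper instead performs two second-order Taylor expansions with integral remainder at the endpoints and subtracts. The two computations coincide once the Poincar\'e inequality is unwound via $u'(t)-\frac1h\int_{j_k}u'(s)\,ds=\frac1h\int_{j_k}\int_s^t u''(r)\,dr\,ds$, but your packaging has the advantage of exhibiting the second estimate as the first one applied to $u'$. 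Your remarks on the vector-valued technicalities (continuity of the representative, validity of the fundamental theorem of calculus for Bochner integrals, identification of the weak derivative of the continuous piecewise affine interpolant) address precisely the points the paper handles by citation, and they are correct.
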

\begin{proof}
  For $t\in(t_j,t_{j+1})$, we have according to~\cite[Prop.~2.5.9]{HvNVW_16} for $u\in H^1(X)$
  \begin{align*}
    u(t) = u(t_j) + \int_{t_j}^t u'(s)\,ds = u(t_{j+1}) - \int_t^{t_{j+1}} u'(s)\,ds
  \end{align*}
  in $X$. Hence,
  \begin{align*}
    \Ii_h^\otimes u (t) = \frac{u(t_{j+1}) (t-t_j) + u(t_j)(t_{j+1}-t)}{h}
    = u(t) + \frac{(t-t_j)\int_t^{t_{j+1}} u'(s)\,ds -(t_{j+1}-t)\int_{t_j}^t u'(s)\,ds}{h},
  \end{align*}
  and we conclude with H\"older that
  \begin{align*}
    \| u(t) - \Ii_h^\otimes u (t) \|_{X}^2 \lesssim h\int_{t_j}^{t_{j+1}} \| u'(s) \|_{X}^2\,ds.
  \end{align*}
  Another integration in $t$ shows the first of the stipulated estimates.
  Likewise, for $t\in(t_j,t_{j+1})$ we can apply two times~\cite[Prop.~2.5.9]{HvNVW_16} for $u\in H^2(X)$ to see
  \begin{align*}
    u(t_{j+1}) &= u(t)+(t_{j+1}-t)u'(t) + \int_t^{t_{j+1}}\int_t^s u''(r)\,dr\,ds,\\
    u(t_j) &= u(t)+(t_j-t)u'(t) + \int_t^{t_j}\int_t^s u''(r)\,dr\,ds
  \end{align*}
  in $X$. Hence,
  \begin{align*}
    (u-\Ii_h^\otimes u)'(t) = u'(t) - \frac{u(t_{j+1}) - u(t_j)}{h}
    = \frac1h\int_t^{t_j}\int_t^s u''(r)\,dr\,ds - \frac1h\int_t^{t_{j+1}}\int_t^s u''(r)\,dr\,ds,
  \end{align*}
  and we conclude with H\"older
  \begin{align*}
    \| (u-\Ii_h^\otimes u)'(t) \|_X^2 \lesssim h\int_{t_j}^{t_{j+1}} \| u''(r) \|_{X}^2\,dr
  \end{align*}
  Another integration in $t$ shows the second of the stipulated estimates.
\end{proof}
Next, we consider fully discrete interpolation operators. In order to analyze their approximation properties,
we will compare them to the semi-discrete operator $\Ii_h^\otimes$. We will employ the $L^2(\Omega)$-orthogonal projections
\begin{align*}
  \Pi_h:L^2(\Omega)&\rightarrow \sS^1(\Omega_h),\\
  \Pi_{0,h}:L^2(\Omega)&\rightarrow \sS^1_0(\Omega_h).
\end{align*}
There holds the approximation property
\begin{align}\label{eq:l2:approx}
  \| \Pi_hu-u \|_{L^2(\Omega)} \lesssim h \| u \|_{H^1(\Omega)}.
\end{align}
Furthermore, it is well-known that $\Pi_h$ is $H^1(\Omega)$-stable uniformly in $h$ on uniform meshes, i.e.,
\begin{align}\label{eq:l2:stab}
  \| \Pi_hu \|_{H^1(\Omega)} \lesssim \| u \|_{H^1(\Omega)}.
\end{align}
This, and the fact that $\Pi_h$ is a projection, implies the approximation estimate
\begin{align}\label{eq:l2:approx:h1}
  \| \Pi_hu-u \|_{H^1(\Omega)} \lesssim h \| u \|_{H^2(\Omega)}.
\end{align}
The statements~\eqref{eq:l2:approx}--\eqref{eq:l2:approx:h1} also hold if we replace $\Pi_h$ by $\Pi_{0,h}$.
Furthermore, it holds that
\begin{align}
  \| \Pi_{0,h}u-u \|_{L^2(\Omega)} &\lesssim h^2 \| u \|_{H^2(\Omega)},
  \label{eq:l20:approx}\\
  \| \Pi_{0,h}u-u \|_{H^{-1}(\Omega)} &\lesssim h^2 \| u \|_{H^1_0(\Omega)},\label{eq:l20:approx:1}
\end{align}
where the second estimate follows from a duality argument.
\begin{theorem}\label{thm:tp}
  Define the operator
  \begin{align*}
    \Jj_h^\otimes: C(\overline J;L^2(\Omega))\rightarrow \sS^1(J_h;\sS^1(\Omega_h))
  \end{align*}
  by $\Jj_h^\otimes := \Ii_h^\otimes\circ (Id\otimes\Pi_h)$. Then it holds
  \begin{align*}
    \| u-\Jj_h^\otimes u \|_{L^2(L^2(\Omega))} &\lesssim h ( \| u \|_{H^1(L^2(\Omega))} + \| u \|_{L^\infty(H^1(\Omega))} ),\\
    \| u-\Jj_h^\otimes u \|_{L^2(H^1(\Omega))} &\lesssim h ( \| u \|_{H^1(H^1(\Omega))} + \| u \|_{L^\infty(H^2(\Omega))} ).
  \end{align*}
  Define the operator
  \begin{align*}
    \Jj_{0,h}^\otimes: C(\overline J;L^2(\Omega))\rightarrow \sS^1(J_h;\sS^1_0(\Omega_h)),
  \end{align*}
  by $\Jj_{0,h}^\otimes := \Ii_h^\otimes\circ (Id\otimes\Pi_{0,h})$. Then it holds
  \begin{align*}
    \| u-\Jj_{0,h}^\otimes u \|_{L^2(H^1_0(\Omega))} &\lesssim h ( \| u \|_{H^1(H^1_0(\Omega))} + \| u \|_{L^\infty(H^2(\Omega))} ),\\
    \| (u-\Jj_{0,h}^\otimes u)' \|_{L^2(H^{-1}(\Omega))} &\lesssim h ( \| u \|_{H^2(H^{-1}(\Omega))} + \| u \|_{L^\infty(H^1_0(\Omega))} ),\\
    \| (u-\Jj_{0,h}^\otimes u)' \|_{L^2(L^2(\Omega))} &\lesssim h ( \| u \|_{H^2(L^2(\Omega))} + \| u \|_{L^\infty(H^2(\Omega))} ).
  \end{align*}
\end{theorem}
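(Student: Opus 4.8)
The plan is to prove each estimate by inserting the intermediate semi-discrete approximant $\Ii_h^\otimes u$ (respectively $\Ii_h^\otimes(\Pi_h u)$) and applying the triangle inequality, thereby reducing everything to Lemma~\ref{thm:tp:lagrange}, the $L^2(\Omega)$-projection estimates~\eqref{eq:l2:approx}--\eqref{eq:l20:approx:1}, and Lemma~\ref{lem:schwarz} for commuting the time derivative with the spatial operator $\Pi_h$. Since $\Jj_h^\otimes = \Ii_h^\otimes\circ(Id\otimes\Pi_h)$, and $\Ii_h^\otimes$ acts only in time while $\Pi_h$ acts only in space, I expect these two operators to commute in a suitable sense: $\Ii_h^\otimes(\Pi_h u)$ interpolates in time the function $t\mapsto \Pi_h u(t)$, and because $\Pi_h$ is a fixed bounded linear operator on $L^2(\Omega)$ (indeed on $H^1(\Omega)$), Lemma~\ref{lem:schwarz} gives $(\Pi_h u)' = \Pi_h(u')$ and $(\Pi_h u)'' = \Pi_h(u'')$. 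This is the structural observation that makes the splitting work cleanly.

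For the first estimate, I would write
\begin{align*}
  u - \Jj_h^\otimes u = (u - \Ii_h^\otimes u) + \Ii_h^\otimes(u - \Pi_h u),
\end{align*}
bound the first term by $h\|u\|_{H^1(L^2(\Omega))}$ via Lemma~\ref{thm:tp:lagrange}, and for the second term use that $\Ii_h^\otimes$ is stable on $C(\overline J;L^2(\Omega))$ into $L^2(L^2(\Omega))$ together with the pointwise-in-time estimate $\|u(t) - \Pi_h u(t)\|_{L^2(\Omega)}\lesssim h\|u(t)\|_{H^1(\Omega)}$ from~\eqref{eq:l2:approx}, then take the supremum in $t$ to get $h\|u\|_{L^\infty(H^1(\Omega))}$. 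The $L^2(H^1(\Omega))$ estimate is identical but uses the $H^1$-stability~\eqref{eq:l2:stab} and approximation~\eqref{eq:l2:approx:h1} of $\Pi_h$; the bound on $u - \Ii_h^\otimes u$ in $L^2(H^1(\Omega))$ follows from Lemma~\ref{thm:tp:lagrange} with $X = H^1(\Omega)$ (noting $\Ii_h^\otimes$ commutes with the spatial gradient). The statements for $\Jj_{0,h}^\otimes$ are verbatim the same arguments with $\Pi_{0,h}$ in place of $\Pi_h$ and the sharper $H^{-1}$- and $L^2$-estimates~\eqref{eq:l20:approx}--\eqref{eq:l20:approx:1}.

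The derivative estimates require a little more care. Using $(\Jj_{0,h}^\otimes u)' = (\Ii_h^\otimes(\Pi_{0,h}u))' = \Ii_h^\otimes((\Pi_{0,h}u)')$ (since differentiating a time-piecewise-affine function commutes with the nodal interpolation of the derivative, or more directly by differentiating the explicit formula for $\Ii_h^\otimes$), and $(\Pi_{0,h}u)' = \Pi_{0,h}(u')$ by Lemma~\ref{lem:schwarz}, I would split
\begin{align*}
  (u - \Jj_{0,h}^\otimes u)' = (u' - \Ii_h^\otimes u') + \Ii_h^\otimes(u' - \Pi_{0,h}u')
  + \big(\Ii_h^\otimes u' - (\Ii_h^\otimes u)'\big);
\end{align*}
but in fact $\Ii_h^\otimes$ reproduces affine functions so the difference of $\Ii_h^\otimes$ applied to $u'$ and $(\Ii_h^\otimes u)'$ vanishes only up to the interpolation error, so it is cleaner to write $(u-\Jj_{0,h}^\otimes u)' = (u - \Ii_h^\otimes u)' + (\Ii_h^\otimes(u - \Pi_{0,h}u))'$ and bound the two pieces separately. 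The first piece is handled by the second assertion of Lemma~\ref{thm:tp:lagrange} with $X = H^{-1}(\Omega)$ or $X = L^2(\Omega)$, giving $h\|u\|_{H^2(H^{-1}(\Omega))}$ resp. $h\|u\|_{H^2(L^2(\Omega))}$. For the second piece, $(\Ii_h^\otimes(u-\Pi_{0,h}u))' = \Ii_h^\otimes((u-\Pi_{0,h}u)')$ up to the $L^2$-projection-in-time onto piecewise constants; applying a standard inverse/stability estimate for $\Ii_h^\otimes$ in the $L^2(X)\to L^2(X)$ sense on its derivative, this is controlled by $\|(u-\Pi_{0,h}u)'\|_{L^2(X)}=\|u'-\Pi_{0,h}u'\|_{L^2(X)}\lesssim h\|u'\|_{L^2(H^1_0(\Omega))}$ for $X=H^{-1}(\Omega)$ by~\eqref{eq:l20:approx:1} applied to $u'(t)$, giving $h\|u\|_{H^1(H^1_0(\Omega))}$, which is dominated by $h\|u\|_{L^\infty(H^1_0(\Omega))}$ after also controlling the interpolation-in-time term; for $X=L^2(\Omega)$ one uses~\eqref{eq:l20:approx} to get $h^2\|u'\|_{L^2(H^2(\Omega))}$, absorbed into $h\|u\|_{L^\infty(H^2(\Omega))}$ together with the contribution from $\Ii_h^\otimes$ acting on the already-first-order term.

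The main obstacle I anticipate is the bookkeeping for the derivative terms: one must be careful that differentiating $\Ii_h^\otimes$ produces the $L^2$-in-time projection onto piecewise-constants of the derivative (not the nodal interpolant of the derivative), so the clean commutation identity is $(\Ii_h^\otimes g)' = \Pi^0_{J_h} g'$ for the piecewise-constant projector $\Pi^0_{J_h}$ in time — which is $L^2(X)$-stable for any Hilbert space $X$ — and combining this with Lemma~\ref{lem:schwarz} is exactly what lets the spatial projection error pass through. Everything else is a routine triangle-inequality argument, and the appearance of the $L^\infty(\cdot)$ norms (rather than the more natural $L^2(\cdot)$ norms) is precisely the price paid for taking a supremum in time in the pointwise spatial approximation estimates.
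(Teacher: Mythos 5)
Your decomposition $u-\Jj_h^\otimes u=(u-\Ii_h^\otimes u)+\Ii_h^\otimes(u-\Pi_h u)$ is exactly the paper's identity~\eqref{eq:thm:tp:1} in disguise, and your treatment of the three non-differentiated estimates (pointwise-in-time nodal projection error, supremum over the nodes, Lemma~\ref{thm:tp:lagrange} for the semi-discrete part) coincides with the paper's argument. The gap is in the two derivative estimates. There you control $(\Ii_h^\otimes(u-\Pi_{0,h}u))'$ through $\|(u-\Pi_{0,h}u)'\|_{L^2(X)}$, i.e.\ via the fundamental theorem of calculus applied to the nodal difference, which yields right-hand sides involving $\|u'\|_{L^2(H^1_0(\Omega))}$ and $\|u'\|_{L^2(H^2(\Omega))}$. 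You then assert these are ``dominated by'' $\|u\|_{L^\infty(H^1_0(\Omega))}$ resp.\ $\|u\|_{L^\infty(H^2(\Omega))}$. That inequality is backwards: on a bounded time interval the embedding $H^1(X)\hookrightarrow C(\overline J;X)$ gives $\|u\|_{L^\infty(X)}\lesssim\|u\|_{H^1(X)}$, not the converse, and $\|u'\|_{L^2(H^2(\Omega))}$ is not controlled by $\|u\|_{L^\infty(H^2(\Omega))}$ at all. So as written you prove a variant of the theorem with incomparable right-hand sides, not the stated estimates. (A minor additional slip: \eqref{eq:l20:approx:1} is an $h^2$ estimate, not $h$ as you quote.)

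The repair is the paper's route, which you almost describe. Differentiating the explicit nodal formula gives, on $(t_j,t_{j+1})$, the difference quotient $h^{-1}\bigl([\Pi_{0,h}u(t_{j+1})-u(t_{j+1})]-[\Pi_{0,h}u(t_j)-u(t_j)]\bigr)$ (this is~\eqref{thm:tp:eq1}). Bound it by the triangle inequality through the two nodal values \emph{separately} and apply the second-order estimates~\eqref{eq:l20:approx:1} resp.~\eqref{eq:l20:approx} at the nodes: the $h^{-1}$ from the quotient against the $h^2$ from the weak-norm projection error leaves exactly $h\sup_k\|u(t_k)\|_{H^1_0(\Omega)}\le h\|u\|_{L^\infty(H^1_0(\Omega))}$, and analogously with $H^2(\Omega)$ for the $L^2$-in-space estimate. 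Your key structural observation --- that the superconvergence of $\Pi_{0,h}$ in the weaker norms is what pays for the lost power of $h$ --- is correct; you just have to cash it in pointwise at the nodes rather than after integrating the time derivative of the projection error.
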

\begin{proof}
  We will first prove the statement for the operator $\Jj_h$. Note that for $t\in(t_j,t_{j+1})$ it holds
  \begin{align}\label{eq:thm:tp:1}
    \Jj_h^\otimes u(t) = \Ii_h^\otimes u(t) + \frac{ [\Pi_h(u(t_{j+1}))-u(t_{j+1})] (t-t_j) + [\Pi_h(u(t_{j}))-u(t_{j})] (t_{j+1}-t)}{h}.
  \end{align}
  Using the approximation property~\eqref{eq:l2:approx} we conclude
  \begin{align*}
    \| u(t)-\Jj_h^\otimes u(t) \|_{L^2(\Omega)}
    &\leq \| u(t) - \Ii_h^\otimes u (t) \|_{L^2(\Omega)}
    + \sum_{k=0,1} \| \Pi_h(u(t_{j+k}))-u(t_{j+k}) \|_{L^2(\Omega)}\\
    &\leq \| u(t) - \Ii_h^\otimes u (t) \|_{L^2(\Omega)}
    + h\sum_{k=0,1} \| u(t_{j+k}) \|_{H^1(\Omega)}.
  \end{align*}
  Another integration in $t$ and application of Theorem~\ref{thm:tp:lagrange} with $X=L^2(\Omega)$ shows the first of the stipulated estimates.
  Likewise, from~\eqref{eq:thm:tp:1} we conclude with the approximation property~\eqref{eq:l2:approx:h1}
  \begin{align*}
    \| u(t)-\Jj_h^\otimes u(t) \|_{H^1(\Omega)} &\leq \| u(t) - \Ii_h^\otimes u (t) \|_{H^1(\Omega)}
    + h\sum_{k=0,1} \| u(t_{j+k}) \|_{H^2(\Omega)}.
  \end{align*}
  Another integration in $t$ and application of Theorem~\ref{thm:tp:lagrange} with $X=H^1(\Omega)$ shows the second of the stipulated estimates.
  To prove the statements for the operator $\Jj_{0,h}^\otimes$ we note that the first estimate follows as in the case of $\Jj_h^\otimes$,
  only replacing $\Pi_h$ by $\Pi_{0,h}$ and $H^1(\Omega)$ by $H^1_0(\Omega)$. Next,
  \begin{align}\label{thm:tp:eq1}
    (\Jj_{0,h}^\otimes u)'(t) = (\Ii_h^\otimes u)'(t) + \frac{ [\Pi_{0,h}(u(t_{j+1}))-u(t_{j+1})] - [\Pi_{0,h}(u(t_{j}))-u(t_{j})] }{h}.
  \end{align}
  We conclude with~\eqref{eq:l20:approx:1} that
  \begin{align*}
    \| u'(t)-(\Jj_{0,h}^\otimes u)'(t) \|_{H^{-1}(\Omega)}
    &\leq \| u'(t) - (\Ii_h^\otimes u)' (t) \|_{H^{-1}(\Omega)}
    + \frac1h\sum_{k=0,1} \| \Pi_{0,h}(u(t_{j+k}))-u(t_{j+k}) \|_{H^{-1}(\Omega)}\\
    &\leq \| u'(t) - (\Ii_h^\otimes u)' (t) \|_{H^{-1}(\Omega)}
    + h\sum_{k=0,1} \| u(t_{j+k}) \|_{H^1_0(\Omega)}.
  \end{align*}
  Another integration in $t$ and application of Theorem~\ref{thm:tp:lagrange} with $X = H^{-1}(\Omega)$ shows the second
  of the stipulated estimates for $\Jj_{0,h}^\otimes$.
  To show the third estimate we apply the same arguments, only this time using the approximation estimate~\eqref{eq:l20:approx}.
\end{proof}
\subsubsection{Space-time interpolation on simplicial meshes}\label{sec:spint}
The tensor-product mesh $J_h\otimes\Omega_h$ consists of elements which are space-time cylinders with
$d$-simplices from $\Omega_h$ as base. It is possible to construct from $J_h\otimes\Omega_h$ a simplicial, admissible mesh $\tT_h$,
following the recent work~\cite{NK_15}. To that end, suppose that the vertices of $\Omega_h$ are numbered like
$p_1,p_2, \dots, p_N$. An element $\omega\in\Omega_h$ can then be represented uniquely as the convex hull
\begin{align*}
  \omega = \textrm{conv}(p_{i^\omega_1}, \dots, p_{i^\omega_{d+1}}), \text{ with } i^\omega_k < i^\omega_\ell \text{ for } k<\ell.
\end{align*}
This ``local numbering'' of vertices is called \textit{consistent numbering} in the literature, cf.~\cite{Bey_00}.
An element $K=\omega\times j_k \in J_h\otimes\Omega_h$ can hence be written as convex hull
\begin{align*}
  K = \textrm{conv}(p_1', \dots, p_{d+1}',p_1'',\dots,p_{d+1}''),
\end{align*}
where
\begin{align*}
  p_\ell' = (p_{i^\omega_\ell},t_k), \quad p_\ell'' = (p_{i^\omega_\ell},t_{k+1}).
\end{align*}
We can split $K$ into $(d+1)$ different $d+1$-simplices
\begin{align}
  \begin{split}\label{int:simp:split}
  \textrm{conv}&(p_1',p_2',\dots, p_{d+1}',p_1'')\\
  \textrm{conv}&(p_2',\dots, p_{d+1}',p_1'',p_2'')\\
  &\vdots\\
  \textrm{conv}&(p_{d+1}',p_1'',p_2'',\dots,p_{d+1}''),
  \end{split}
\end{align}
and this way we obtain a simplicial triangulation $\tT_h$ of $J\times \Omega$.
There holds the following result from~\cite[Thm.~1]{NK_15}.
\begin{theorem}
  The simplicial partition $\tT_h$ is admissible.
\end{theorem}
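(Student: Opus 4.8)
The plan is to reduce the global statement to a purely local, combinatorial compatibility check on shared faces of space-time cylinders, and then to verify that check for the Kuhn (``path'') triangulation~\eqref{int:simp:split}. First I would note that the decomposition~\eqref{int:simp:split} of a single cylinder $K=\omega\times j_k$ is the classical triangulation of a prism over a $d$-simplex into $d+1$ simplices, so each local decomposition is by itself an admissible simplicial complex; consequently $\tT_h$ is admissible if and only if, for every pair of cylinders of $J_h\otimes\Omega_h$, the triangulations of their common intersection induced from either side agree. Since in a tensor-product mesh the intersection of two cylinders $\omega_1\times j_{k_1}$ and $\omega_2\times j_{k_2}$ has the form $(\omega_1\cap\omega_2)\times(j_{k_1}\cap j_{k_2})$ with $\omega_1\cap\omega_2$ a face of $\Omega_h$ and $j_{k_1}\cap j_{k_2}$ either a time interval or a single time node, it suffices to treat a \emph{temporal} face $\omega\times\{t_k\}$ and a \emph{lateral} face $f\times j_k$ with $f$ a facet of $\omega$, the general lower-dimensional case following by iterating the lateral argument.

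For a temporal face I would simply inspect~\eqref{int:simp:split}: only the last listed simplex has a $d$-dimensional face on the top lid $\omega\times\{t_{k+1}\}$, and that face is the whole lid $\textrm{conv}(p_1'',\dots,p_{d+1}'')$; symmetrically, only the first listed simplex touches the bottom lid $\omega\times\{t_k\}$, again in the whole lid $\textrm{conv}(p_1',\dots,p_{d+1}')$. Hence across a temporal face both adjacent cylinders see one and the same $d$-simplex and no hanging node arises.

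The heart of the proof is the lateral case, and this is where I expect the work to be. Let $f$ be the facet of $\omega$ obtained by deleting the vertex of local index $r$, and give $f$ the ordering inherited from the consistent numbering of $\omega$. Writing $S_m=\textrm{conv}(p_m',\dots,p_{d+1}',p_1'',\dots,p_m'')$ for the $m$-th simplex of~\eqref{int:simp:split}, I would verify: (i) $S_m$ has a $d$-face on $f\times j_k$ exactly when $m\neq r$ — when $m=r$ both $p_r'$ and $p_r''$ lie in $S_m$, so $S_m$ meets $f\times j_k$ only in codimension two; (ii) for $m\neq r$ that face is $S_m$ with $p_r'$ deleted if $m<r$, and with $p_r''$ deleted if $m>r$; and (iii) after relabelling the vertices of $f$ according to the inherited order, these $d$ faces are precisely the $d$ simplices of the path triangulation of the prism $f\times j_k$, the correspondence being $m\mapsto m$ for $m<r$ and $m\mapsto m-1$ for $m>r$. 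This is the bookkeeping-heavy step: it is elementary but one has to track the index shift caused by removing vertex $r$ from the numbering of $\omega$, and check that the induced vertices match up on both the bottom and the top layer.

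Granting this lemma, admissibility follows quickly: if two cylinders $\omega\times j_k$ and $\omega'\times j_k$ share the lateral face $f\times j_k$, then $f$ is a facet of both, and the key point is that the orderings of the vertices of $f$ inherited from $\omega$ and from $\omega'$ coincide, both being the restriction of the single global numbering $p_1,\dots,p_N$ — this is exactly what \emph{consistent numbering} buys us. Therefore the triangulations of $f\times j_k$ induced from the two cylinders agree, and by restricting once more they agree on every sub-face $g\times e$. Combined with the temporal case and the conformity of each cylinder's decomposition, this shows that $\tT_h$ is an admissible simplicial mesh of $J\times\Omega$, which is the argument of \cite[Thm.~1]{NK_15}.
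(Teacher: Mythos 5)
Your argument is correct, but note that the paper itself does not prove this theorem at all: it simply cites \cite[Thm.~1]{NK_15}, so what you have written is a self-contained reconstruction of that cited result rather than an alternative to an in-paper proof. The combinatorial core checks out. Writing $S_m=\textrm{conv}(p_m',\dots,p_{d+1}',p_1'',\dots,p_m'')$, the vertex $p_r'$ belongs to $S_m$ iff $r\ge m$ and $p_r''$ belongs iff $r\le m$, which gives exactly your trichotomy: for $m=r$ both are present and the trace on the lateral face $f\times j_k$ is only $(d-1)$-dimensional, while for $m<r$ (resp.\ $m>r$) deleting $p_r'$ (resp.\ $p_r''$) yields a $d$-face; after the relabelling $q_\ell=p_{i^\omega_\ell}$ for $\ell<r$ and $q_\ell=p_{i^\omega_{\ell+1}}$ for $\ell\ge r$, these faces are $T_m$ for $m<r$ and $T_{m-1}$ for $m>r$, i.e.\ precisely the path triangulation of $f\times j_k$ in the inherited order. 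The decisive observation — that the orders inherited on $f$ from the two adjacent spatial simplices coincide because both are restrictions of the single global numbering $p_1,\dots,p_N$ — is exactly the role of consistent numbering, and your temporal-face case is immediate since only the first and last simplices of \eqref{int:simp:split} carry a $d$-face on a lid, namely the whole lid. The one point you pass over lightly is that the lower-dimensional intersections (shared edges, vertices, and the traces of the interior simplices $S_m$, $1<m<d+1$, on the lids) must also be common faces; this does follow by the restriction argument you indicate, since all such traces are faces of the simplices already matched on facets, but it deserves a sentence rather than a clause. With that caveat, your proof is complete and supplies the detail the paper delegates to the reference.
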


In order to approximate a function in space-time by an element of $\sS^1(\tT_h)$, we note that
$J_h\otimes\Omega_h$ and $\tT_h$ have the same set of vertices, and hence
$\sS^1(J_h;\sS^1(\Omega_h))$ and $\sS^1(\tT_h)$, as well as $\sS^1(J_h;\sS^1_0(\Omega_h))$ and $\sS^1_0(\tT_h)$,
have the same degrees of freedom. We can therefore define operators
\begin{align*}
  \Jj_h:L^2(H^1(\Omega))\cap H^1(H^{-1}_0(\Omega))\rightarrow \sS^1(\tT_h),\\
  \Jj_{0,h} :L^2(H^1_0(\Omega))\cap H^1(H^{-1}(\Omega))\rightarrow \sS^1_0(\tT_h)
\end{align*}
by requiring $\Jj_h u$ to have the same values as $\Jj_h^\otimes u$ at all vertices, likewise for $\Jj_{0,h}$.
We will analyze these new operators by comparing them to their tensor product versions.
To that end, the following Lemma will be useful.
\begin{lemma}\label{thm:int:simp:aux}
  There holds
  \begin{align*}
    \| \nabla(\Jj_{h}^\otimes u)' \|_{L^2(L^2(\Omega))} \lesssim \| u \|_{H^1(H^1(\Omega))} + \| u \|_{L^\infty(H^2(\Omega))},\\
    \| \nabla(\Jj_{0,h}^\otimes u)' \|_{L^2(L^2(\Omega))} \lesssim \| u \|_{H^1(H^1_0(\Omega))} + \| u \|_{L^\infty(H^2(\Omega))}.
  \end{align*}
\end{lemma}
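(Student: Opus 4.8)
The plan is to exploit that the time derivative of the tensor-product interpolant is fully explicit and, moreover, factors through the spatial $L^2$-projection, so that only the $H^1(\Omega)$-stability of $\Pi_h$ (resp.\ $\Pi_{0,h}$) and a one-dimensional interpolation estimate are needed. First I would fix $t\in(t_j,t_{j+1})$; since $\Pi_h$ is linear and $\Jj_h^\otimes u$ is by construction the piecewise-linear-in-time interpolant of the nodal values $t_k\mapsto\Pi_h u(t_k)$,
\begin{align*}
  (\Jj_h^\otimes u)'(t)
  &= \frac{\Pi_h u(t_{j+1})-\Pi_h u(t_j)}{h}\\
  &= \Pi_h\Big(\tfrac{u(t_{j+1})-u(t_j)}{h}\Big)
  = \Pi_h\big((\Ii_h^\otimes u)'(t)\big),
\end{align*}
where the point values make sense in $H^1(\Omega)$ because $u\in H^1(H^1(\Omega))\hookrightarrow C(\overline J;H^1(\Omega))$ (cf.\ \cite[Prop.~2.5.9]{HvNVW_16}); equivalently this identity is Lemma~\ref{lem:schwarz} applied to $L=\Pi_h$ together with the fact that the time interpolation $\Ii_h^\otimes$ and the spatial projection act on independent variables and hence commute.

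Next I would apply the spatial gradient and invoke the $H^1(\Omega)$-stability of $\Pi_h$ on uniform meshes, estimate~\eqref{eq:l2:stab}, to obtain the pointwise-in-$t$ bound
\begin{align*}
  \|\nabla(\Jj_h^\otimes u)'(t)\|_{L^2(\Omega)}
  \le \|\Pi_h\big((\Ii_h^\otimes u)'(t)\big)\|_{H^1(\Omega)}
  \lesssim \|(\Ii_h^\otimes u)'(t)\|_{H^1(\Omega)}.
\end{align*}
Squaring and integrating over $J$ reduces the claim to a stability estimate for piecewise-linear time interpolation: on each $(t_j,t_{j+1})$ the function $(\Ii_h^\otimes u)'$ equals the constant $\tfrac{1}{h}\int_{t_j}^{t_{j+1}}u'(s)\,ds$, and a Cauchy--Schwarz estimate of this average (the computation already carried out in the proof of Lemma~\ref{thm:tp:lagrange}) gives $\|(\Ii_h^\otimes u)'\|_{L^2(H^1(\Omega))}\le\|u'\|_{L^2(H^1(\Omega))}\le\|u\|_{H^1(H^1(\Omega))}$. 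This proves the first estimate — in fact without the $\|u\|_{L^\infty(H^2(\Omega))}$ term — and the second follows verbatim with $\Pi_h$ replaced by $\Pi_{0,h}$, which is $H^1$-stable on uniform meshes as well, and with $u\in H^1(H^1_0(\Omega))\hookrightarrow C(\overline J;H^1_0(\Omega))$.

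There are no inverse estimates or mesh-geometry arguments here; the uniformity hypothesis enters only through the $H^1$-stability of the $L^2$-projections, and the only thing that needs care is the Bochner-space bookkeeping (point evaluations, the fundamental theorem of calculus, commuting bounded operators with the time integral), all supplied by Lemma~\ref{lem:schwarz} and \cite[Prop.~2.5.9]{HvNVW_16}; that is the (mild) main obstacle I anticipate. If one prefers to reproduce the stated right-hand side literally, including the $\|u\|_{L^\infty(H^2(\Omega))}$ contribution, one can instead differentiate identity~\eqref{eq:thm:tp:1} (respectively use~\eqref{thm:tp:eq1} for $\Jj_{0,h}^\otimes$) and split $\nabla(\Jj_h^\otimes u)'$ into the $\nabla(\Ii_h^\otimes u)'$-part, handled, after commuting $\nabla$ with $\Ii_h^\otimes$, by the stability bound above applied to $\nabla u$, plus the remainder $\tfrac{1}{h}\nabla\big([\Pi_h u(t_{j+1})-u(t_{j+1})]-[\Pi_h u(t_j)-u(t_j)]\big)$, whose $L^2(\Omega)$-norm is pointwise in $t$ bounded by $\tfrac{1}{h}\sum_{k=0,1}\|\Pi_h u(t_{j+k})-u(t_{j+k})\|_{H^1(\Omega)}\lesssim\sum_{k=0,1}\|u(t_{j+k})\|_{H^2(\Omega)}\le 2\|u\|_{L^\infty(H^2(\Omega))}$ by the $H^1$-projection error~\eqref{eq:l2:approx:h1}; integrating over the bounded interval $J$ then yields exactly the stated term.
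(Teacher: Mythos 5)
Your main argument is correct and takes a genuinely different route from the paper. The paper differentiates the representation~\eqref{thm:tp:eq1} and splits $(\Jj_{0,h}^\otimes u)'$ into $(\Ii_h^\otimes u)'$ plus the difference quotient of the spatial projection errors; the gradient of the first piece is bounded by $h^{-1/2}\norm{u'}{L^2(t_j,t_{j+1};H^1_0(\Omega))}$ via the fundamental theorem of calculus, and the second piece is bounded through the $H^1$ approximation property~\eqref{eq:l2:approx:h1}, which is exactly where the $\norm{u}{L^\infty(H^2(\Omega))}$ term enters. You instead use the commutation identity $(\Jj_h^\otimes u)'=\Pi_h\big((\Ii_h^\otimes u)'\big)$ and invoke the $H^1$-stability~\eqref{eq:l2:stab} directly, which yields the cleaner bound $\norm{\nabla(\Jj_h^\otimes u)'}{L^2(L^2(\Omega))}\lesssim\norm{u}{H^1(H^1(\Omega))}$ with no $H^2$ regularity at all --- a strictly stronger statement that of course implies the lemma, and your bookkeeping of the point values $u(t_k)\in H^1(\Omega)$ via $H^1(X)\hookrightarrow C(\overline J;X)$ is sound. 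Both arguments ultimately rest on the same ingredient, namely the uniform $H^1$-stability of $\Pi_h$ and $\Pi_{0,h}$ on uniform meshes (the paper's~\eqref{eq:l2:approx:h1} is itself derived from~\eqref{eq:l2:stab}), but you use it one step earlier and thereby avoid paying for it with extra regularity of $u$. Your closing alternative --- differentiating~\eqref{eq:thm:tp:1}, respectively~\eqref{thm:tp:eq1}, and estimating the remainder by~\eqref{eq:l2:approx:h1} --- reproduces the paper's proof essentially verbatim, so in effect you have supplied both arguments.
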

\begin{proof}
  We will show the second estimate, the first one follows analogously.
  Note that for $t\in(t_j,t_{j+1})$ we have due to~\eqref{thm:tp:eq1}
  \begin{align*}
    \| \nabla(\Jj_{0,h}^\otimes u)'(t) \|_{L^2(\Omega)} &\lesssim \frac1h\| u(t_{_j+1})-u(t_j) \|_{H^1_0(\Omega)}
    + \frac1h\sum_{k=0,1} \| \Pi_{0,h}(u(t_{j+k}))-u(t_{j+k}) \|_{H^1_0(\Omega)}\\
    &\lesssim h^{-1/2}\| u' \|_{L^2(t_j,t_{j+1};H^1_0(\Omega))}
    + \sum_{k=0,1} \| u(t_{j+k}) \|_{H^2(\Omega)}.
  \end{align*}
  An integration in $t$ shows the result.
\end{proof}
\begin{lemma}\label{lem:int:simp}
  Let $K\in \tT_h$. Then,
  \begin{align*}
    \|\nabla(\Jj_{0,h}^\otimes-\Jj_{0,h})u\|_{L^2(K)} + \|(\Jj_{0,h}^\otimes u-\Jj_{0,h}u)'\|_{L^2(K)}
    &\lesssim h^{-1} \|(\Jj_{0,h}^\otimes-\Jj_{0,h})u\|_{L^2(K)} 
    \\ 
    &\lesssim h \|\nabla(\Jj_{0,h}^\otimes u)'\|_{L^2(K)}
  \end{align*}
  and
  \begin{align*}
    \|\nabla(\Jj_{h}^\otimes-\Jj_{h})u\|_{L^2(K)} + \|(\Jj_{h}^\otimes u-\Jj_{h}u)'\|_{L^2(K)}
    &\lesssim h^{-1} \|(\Jj_{h}^\otimes-\Jj_{h})u\|_{L^2(K)} 
    \lesssim h \|\nabla(\Jj_{h}^\otimes u)'\|_{L^2(K)}.
  \end{align*}
\end{lemma}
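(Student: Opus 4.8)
The plan is to localize to a single simplex $K\in\tT_h$ and to exploit the rigid structure of $\Jj_{0,h}^\otimes u$ there. By the construction in Section~\ref{sec:spint}, $K$ is contained in exactly one space-time cylinder $\omega\times j_k$ of the tensor product mesh $J_h\otimes\Omega_h$, and on that cylinder the function $w:=\Jj_{0,h}^\otimes u=\Ii_h^\otimes\big((Id\otimes\Pi_{0,h})u\big)$ equals $\tfrac{t_{k+1}-t}{h}\,\Pi_{0,h}(u(t_k))+\tfrac{t-t_k}{h}\,\Pi_{0,h}(u(t_{k+1}))$, i.e.\ $w|_{\omega\times j_k}$ is affine in $t$ and, for each fixed $t$, affine in $x$. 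Restricted to $K$, $w$ is therefore a space-time polynomial of degree at most two whose only non-vanishing second order derivatives are the mixed ones $\partial_{x_i}w'=\partial_{x_i}\partial_t w$, $i=1,\dots,d$; in particular $w'=(\Jj_{0,h}^\otimes u)'$ is affine in $x$ and independent of $t$ on $K$ (cf.\ also \eqref{thm:tp:eq1}), and the second order seminorm of $w$ on $K$ satisfies $|w|_{H^2(K)}\simeq\|\nabla w'\|_{L^2(K)}=\|\nabla(\Jj_{0,h}^\otimes u)'\|_{L^2(K)}$.

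Next I would observe that $\Jj_{0,h}u|_K$ is nothing but the linear nodal interpolant $\Ii_K w$ of $w$ on the simplex $K$: indeed $\tT_h$ and $J_h\otimes\Omega_h$ share the same vertices, so the $d+2$ vertices of $K$ all lie in $\overline{\omega\times j_k}$, and by definition $\Jj_{0,h}u$ agrees with $\Jj_{0,h}^\otimes u=w$ at precisely these points while being affine on $K$. Hence $(\Jj_{0,h}^\otimes-\Jj_{0,h})u|_K=w-\Ii_K w$, a polynomial of degree at most two vanishing at all vertices of $K$. The two chained inequalities then follow from textbook estimates on $K$, which is shape-regular with diameter $\simeq h$ because $\tT_h$ is structured and uniform. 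For the left one, $\nabla(w-\Ii_K w)$ and $(w-\Ii_K w)'$ are polynomials of bounded degree, so the inverse estimate gives $\|\nabla(w-\Ii_K w)\|_{L^2(K)}+\|(w-\Ii_K w)'\|_{L^2(K)}\lesssim h^{-1}\|w-\Ii_K w\|_{L^2(K)}$. For the right one, the standard $L^2$ interpolation error bound $\|w-\Ii_K w\|_{L^2(K)}\lesssim h^2|w|_{H^2(K)}$ combined with the identity $|w|_{H^2(K)}\simeq\|\nabla(\Jj_{0,h}^\otimes u)'\|_{L^2(K)}$ from the first paragraph yields $h^{-1}\|w-\Ii_K w\|_{L^2(K)}\lesssim h\,\|\nabla(\Jj_{0,h}^\otimes u)'\|_{L^2(K)}$. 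The estimates for $\Jj_h^\otimes$ and $\Jj_h$ are obtained verbatim, replacing $\Pi_{0,h}$ by $\Pi_h$ and $\sS^1_0$ by $\sS^1$.

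The only point that takes genuine care — the remaining steps being routine scaling and inverse arguments on a reference simplex — is the bookkeeping just described: that $\Jj_{0,h}u|_K$ really is the affine interpolant $\Ii_K w$ of the space-time bilinear function $w=\Jj_{0,h}^\otimes u|_K$ (which uses both that the two meshes share vertices and that all vertices of $K$ sit in one cylinder), together with the observation that the bilinearity of $w$ collapses its full Hessian on $K$ to $\nabla w'$ alone. It is exactly this collapse that converts the generic interpolation bound $h^2|w|_{H^2(K)}$ into the right-hand side appearing in the statement; without it one would only control the difference by all second derivatives of $\Jj_{0,h}^\otimes u$, which need not be small.
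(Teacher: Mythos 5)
Your proposal is correct and follows essentially the same route as the paper: both arguments rest on the observations that $\Jj_{0,h}^\otimes u|_K$ lies in $\mathrm{span}\{1,t,x_j,x_jt\}$, that $\Jj_{0,h}u|_K$ is its affine vertex interpolant, and that the resulting difference is controlled (after an inverse estimate and scaling) by the mixed derivatives $\nabla(\Jj_{0,h}^\otimes u)'$ alone. The only difference is packaging --- the paper writes out an explicit basis $\{\nu_j\}$ for the complement of the affine functions and scales it by hand, whereas you invoke the standard Lagrange interpolation bound $\|w-\Ii_K w\|_{L^2(K)}\lesssim h^2|w|_{H^2(K)}$ (unproblematic here since $w$ is a polynomial) together with the collapse $|w|_{H^2(K)}\simeq\|\nabla w'\|_{L^2(K)}$; these are the same computation.
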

\begin{proof}
  We will only show the estimates involving $\Jj_{0,h}$, as the ones involving $\Jj_h$ follow the same lines.
  The first estimate follows from a standard inverse inequality on polynomial spaces.
  To see the second, we write $K = \mathrm{conv}\{z_1,\dots,z_{d+2}\}$ and
  \begin{align*}
  \Jj_{0,h}^\otimes u|_K = \sum_{j=1}^{d+2} \alpha_j \eta_j + \sum_{j=1}^d \beta_j \nu_j \in
    \mathrm{span}\{1,t,x_j,x_jt\,:\,j=1,\dots,d\},
  \end{align*}
  where $\eta_1,\dots,\eta_{d+2}$ are the hat functions associated to the vertices $z_j$ of $K$.
  Here, $1,t,x_j,x_jt$ stand for the functions $(t,x)\mapsto 1$, $(t,x)\mapsto t$, $(t,x)\mapsto x_j$, $(t,x)\mapsto x_jt$.
  Moreover, we choose $\nu_j$ such that these functions vanish in the vertices of $K$,  $\|\nu_j\|_{L^\infty(K)}
  \lesssim 1$, and $\nabla \nu_j' \simeq h^{-2}$.
  By definition $J_{0,h}u$ is affine on $K$ and takes the same values as $J_{0,h}^\otimes u$ in the vertices of $K$. Therefore
  \begin{align*}
    (\Jj_{0,h}^\otimes-\Jj_{0,h})u|_K = \sum_{j=1}^d \beta_j \nu_j.
  \end{align*}
  Then, scaling arguments, norm equivalence 
  and $\nabla(\nu_j)'\simeq h^{-2}$ show that
  \begin{align*}
    \| (\Jj_{0,h}^\otimes-\Jj_{0,h})u \|_{L^2(K)} &\lesssim |K|^{1/2} \sum_{j=1}^d |\beta_j|
    = h^2 \frac{|K|^{1/2}}{h^2} \sum_{j=1}^d |\beta_j| \simeq h^2 \|\nabla(J_{0,h}^\otimes u)'\|_{L^2(K)},
  \end{align*}
  which finishes the proof.
\end{proof}

\begin{theorem}\label{thm:int:simp}
  There holds
  \begin{align*}
    \| u-\Jj_{0,h} u \|_{L^2(H^1_0(\Omega))} &\lesssim h ( \| u \|_{H^1(H^1_0(\Omega))} + \| u \|_{L^\infty(H^2(\Omega))} )\\
    \| (u-\Jj_{0,h} u)' \|_{L^2(H^{-1}(\Omega))} &\lesssim
    h ( \| u \|_{H^1(H^1_0(\Omega))} + \| u \|_{H^2(H^{-1}(\Omega))} + \| u \|_{L^\infty(H^2(\Omega))} ),\\
    \| (u-\Jj_{0,h} u)' \|_{L^2(L^2(\Omega))} &\lesssim
    h ( \| u \|_{H^1(H^1_0(\Omega))} + \| u \|_{H^2(L^2(\Omega))} + \| u \|_{L^\infty(H^2(\Omega))} ).
  \end{align*}
\end{theorem}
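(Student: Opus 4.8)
The plan is to split, for each of the three left-hand sides,
\begin{align*}
  u - \Jj_{0,h}u = \bigl(u - \Jj_{0,h}^\otimes u\bigr) + \bigl(\Jj_{0,h}^\otimes u - \Jj_{0,h} u\bigr),
\end{align*}
estimate the first summand directly by Theorem~\ref{thm:tp} and the second summand (the ``simplicial correction'') by chaining Lemma~\ref{lem:int:simp} into Lemma~\ref{thm:int:simp:aux}. For this to make sense one first notes that under the stated regularity $u\in C(\overline J;L^2(\Omega))$ (Lemma~\ref{lem:ttrace} applied with the Gelfand triple $H^1_0(\Omega)\hookrightarrow L^2(\Omega)\hookrightarrow H^{-1}(\Omega)$), so $\Jj_{0,h}^\otimes u$, and hence its nodal reinterpretation $\Jj_{0,h}u$ on $\tT_h$, are well defined.

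First I would dispatch the tensor-product terms: the quantities $\|u-\Jj_{0,h}^\otimes u\|_{L^2(H^1_0(\Omega))}$, $\|(u-\Jj_{0,h}^\otimes u)'\|_{L^2(H^{-1}(\Omega))}$ and $\|(u-\Jj_{0,h}^\otimes u)'\|_{L^2(L^2(\Omega))}$ are bounded by $h$ times the indicated combinations of norms — these are literally the three estimates of Theorem~\ref{thm:tp}, so nothing remains to be done. Next I would treat the correction $\Jj_{0,h}^\otimes u - \Jj_{0,h}u$: summing the element-wise inequality of Lemma~\ref{lem:int:simp} (in squared form) over all $K\in\tT_h$, with mesh-independent constants because $\tT_h$ is the structured, uniform mesh of Section~\ref{sec:spint}, gives
\begin{align*}
  \|\nabla(\Jj_{0,h}^\otimes - \Jj_{0,h})u\|_{L^2(L^2(\Omega))} + \|(\Jj_{0,h}^\otimes u - \Jj_{0,h}u)'\|_{L^2(L^2(\Omega))}
  \lesssim h\,\|\nabla(\Jj_{0,h}^\otimes u)'\|_{L^2(L^2(\Omega))},
\end{align*}
and, by the same lemma, also $\|(\Jj_{0,h}^\otimes - \Jj_{0,h})u\|_{L^2(L^2(\Omega))}\lesssim h^2\|\nabla(\Jj_{0,h}^\otimes u)'\|_{L^2(L^2(\Omega))}$. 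Lemma~\ref{thm:int:simp:aux} bounds the right-hand side by $h(\|u\|_{H^1(H^1_0(\Omega))}+\|u\|_{L^\infty(H^2(\Omega))})$. Using $\|\cdot\|_{H^1_0(\Omega)}\simeq\|\nabla\cdot\|_{L^2(\Omega)}$ (Poincaré) and the continuous embedding $L^2(\Omega)\hookrightarrow H^{-1}(\Omega)$, all three relevant norms of the correction — the $L^2(H^1_0(\Omega))$ norm of the function, and the $L^2(H^{-1}(\Omega))$ and $L^2(L^2(\Omega))$ norms of its time derivative — are thereby $\lesssim h(\|u\|_{H^1(H^1_0(\Omega))}+\|u\|_{L^\infty(H^2(\Omega))})$.

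Finally I would add the two contributions via the triangle inequality; the stated right-hand sides are exactly what comes out, with $\|u\|_{H^2(H^{-1}(\Omega))}$ and $\|u\|_{H^2(L^2(\Omega))}$ supplied by the tensor-product part and $\|u\|_{H^1(H^1_0(\Omega))}$, $\|u\|_{L^\infty(H^2(\Omega))}$ by both parts. I do not expect a genuine obstacle: every ingredient is already in place. The only points deserving a moment's care are (i) that passing Lemma~\ref{lem:int:simp} from a single element $K$ to the global norm costs no power of $h$, since that estimate is already in additive $L^2$ form over elements; and (ii) that the regularity consumed by Theorem~\ref{thm:tp} and Lemma~\ref{thm:int:simp:aux} is precisely the one hypothesized here, so that no hidden extra smoothness of $u$ is required.
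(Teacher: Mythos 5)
Your proposal is correct and follows essentially the same route as the paper: split off the tensor-product interpolant, invoke Theorem~\ref{thm:tp} for that part, and control the correction $\Jj_{0,h}^\otimes u-\Jj_{0,h}u$ by summing Lemma~\ref{lem:int:simp} over elements, chaining into Lemma~\ref{thm:int:simp:aux}, and using $L^2(\Omega)\hookrightarrow H^{-1}(\Omega)$ for the dual-norm estimate. No substantive difference from the paper's argument.
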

\begin{proof}
  To show the first estimate, in view of Theorem~\ref{thm:tp} it suffices to consider $\| \Jj_{0,h}^\otimes u-\Jj_{0,h} u \|_{L^2(H^1_0(\Omega))}$.
  Note that by Lemma~\ref{lem:int:simp} we have that
  \begin{align*}
    \| \nabla(\Jj_{0,h}^\otimes u-\Jj_{0,h} u) \|_{L^2(K)} &\lesssim h \| \nabla(\Jj_{0,h}^\otimes u)' \|_{L^2(K)}
  \end{align*}
  Summing over all elements $K\in\tT_h$ and applying Lemma~\ref{thm:int:simp:aux} shows the first of the stipulated estimates.
  To show the second and third estimate, we will again apply Theorem~\ref{thm:tp}.
  In order to treat the remaining terms,  note that
  \begin{align*}
    \| (\Jj_{0,h}^\otimes u-\Jj_{0,h} u)' \|_{L^2(H^{-1}(\Omega))} \leq \| (\Jj_{0,h}^\otimes u-\Jj_{0,h} u)' \|_{L^2(L^2(\Omega))}
    \lesssim h \|\nabla(\Jj_{0,h}u)'\|_{L^2(L^2(\Omega))},
  \end{align*}
  where the last estimate follows from Lemma~\ref{lem:int:simp}. Then, we apply Lemma~\ref{thm:int:simp:aux}
  to finish the proof.
\end{proof}
\begin{theorem}\label{thm:int:simp:2}
  There holds
  \begin{align*}
    \| u-\Jj_h u \|_{L^2(H^1(\Omega))} &\lesssim h ( \| u \|_{H^1(H^1(\Omega))} + \| u \|_{L^\infty(H^2(\Omega))} ).
  \end{align*}
\end{theorem}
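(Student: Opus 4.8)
The plan is to mimic the argument used for the first estimate in Theorem~\ref{thm:int:simp}, replacing the operator $\Jj_{0,h}$ by $\Jj_h$ and the space $H^1_0(\Omega)$ by $H^1(\Omega)$ throughout. First I would split, by the triangle inequality,
\begin{align*}
  \| u-\Jj_h u \|_{L^2(H^1(\Omega))} \leq \| u-\Jj_h^\otimes u \|_{L^2(H^1(\Omega))} + \| \Jj_h^\otimes u-\Jj_h u \|_{L^2(H^1(\Omega))}.
\end{align*}
The first term on the right is precisely the second estimate of Theorem~\ref{thm:tp}, which already gives the bound $h(\| u \|_{H^1(H^1(\Omega))} + \| u \|_{L^\infty(H^2(\Omega))})$, so nothing more is needed there.

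For the remaining term I would exploit that the $L^2(H^1(\Omega))$-norm of a space-time function decomposes over the simplicial elements $K\in\tT_h$, namely $\|v\|_{L^2(H^1(\Omega))}^2 = \sum_{K\in\tT_h}\big(\|v\|_{L^2(K)}^2 + \|\nabla v\|_{L^2(K)}^2\big)$ with $\nabla$ denoting the spatial gradient. On each element $K$, Lemma~\ref{lem:int:simp} (the chain of estimates for $\Jj_h$) yields simultaneously
\begin{align*}
  \|\nabla(\Jj_h^\otimes u-\Jj_h u)\|_{L^2(K)} \lesssim h\,\|\nabla(\Jj_h^\otimes u)'\|_{L^2(K)},
  \qquad
  \|\Jj_h^\otimes u-\Jj_h u\|_{L^2(K)} \lesssim h^2\,\|\nabla(\Jj_h^\otimes u)'\|_{L^2(K)},
\end{align*}
the second following from the first link $h^{-1}\|(\Jj_h^\otimes-\Jj_h)u\|_{L^2(K)} \lesssim h\|\nabla(\Jj_h^\otimes u)'\|_{L^2(K)}$. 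Squaring, adding, and summing over $K\in\tT_h$ then gives $\|\Jj_h^\otimes u - \Jj_h u\|_{L^2(H^1(\Omega))} \lesssim h\,\|\nabla(\Jj_h^\otimes u)'\|_{L^2(L^2(\Omega))}$.

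Finally I would invoke the first estimate of Lemma~\ref{thm:int:simp:aux}, which bounds $\|\nabla(\Jj_h^\otimes u)'\|_{L^2(L^2(\Omega))}$ by $\| u \|_{H^1(H^1(\Omega))} + \| u \|_{L^\infty(H^2(\Omega))}$, and combine with the estimate for the first term to conclude. There is no genuine obstacle here: the only point requiring minimal care is checking that the $L^2(H^1(\Omega))$-norm indeed localizes to the space-time simplices and that the $h$-powers coming from the two bounds in Lemma~\ref{lem:int:simp} are both at least one, so that the element-wise sum collapses to the global quantity controlled by Lemma~\ref{thm:int:simp:aux}; this is the same mechanism already used in the proof of Theorem~\ref{thm:int:simp}.
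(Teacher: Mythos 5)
Your proposal is correct and follows essentially the same route as the paper: the paper's proof likewise combines the second estimate of Theorem~\ref{thm:tp} with the elementwise bound $\| (\Jj_h^\otimes -\Jj_h )u\|_{L^2(K)} + \| \nabla(\Jj_h^\otimes -\Jj_h )u\|_{L^2(K)} \lesssim h\|\nabla(\Jj_h^\otimes u)'\|_{L^2(K)}$ from Lemma~\ref{lem:int:simp}, sums over the elements, and concludes with Lemma~\ref{thm:int:simp:aux}. You have merely written out the steps that the paper compresses into ``the remainder of the proof follows as for Theorem~\ref{thm:int:simp}.''
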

\begin{proof}
  We note that
  \begin{align*}
    \| (\Jj_h^\otimes -\Jj_h )u\|_{L^2(K)} + \| \nabla(\Jj_h^\otimes -\Jj_h )u\|_{L^2(K)} 
     \lesssim h\|\nabla(\Jj_h^\otimes u)'\|_{L^2(K)}
   \end{align*}
   by Lemma~\ref{lem:int:simp}. 
   The remainder of the proof follows as for Theorem~\ref{thm:int:simp}.
\end{proof}
\subsubsection{Approximating the heat equation in the energy norm}
We have the following result.
\begin{theorem}\label{thm:heat:nrg}
  Let $\Omega$ be a convex polygonal domain.
  Let $u_0\in H^1_0(\Omega)\cap H^2(\Omega)$ and $f\in H^1(L^2(\Omega))$, and $u$ the solution of the heat equation~\eqref{eq:model}.
  Suppose that $\tT_h$ is a simplicial mesh constructed from a tensor product $J_h\otimes \Omega_h$. Then
  \begin{align*}
    \| u - \Jj_{0,h}u \|_{L^2(H^1_0(\Omega))\cap H^1(H^{-1}(\Omega))} = \OO(h).
  \end{align*}
\end{theorem}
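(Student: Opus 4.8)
The plan is to deduce this theorem directly from the interpolation estimates established in Theorem~\ref{thm:int:simp}, after checking that the regularity hypotheses on $u_0$, $f$, and $\Omega$ are strong enough to supply the Sobolev--Bochner regularity of $u$ that those estimates require. Concretely, the target norm expands as
\begin{align*}
  \| u-\Jj_{0,h}u \|_{L^2(H^1_0(\Omega))\cap H^1(H^{-1}(\Omega))}^2
  = \| u-\Jj_{0,h}u \|_{L^2(H^1_0(\Omega))}^2 + \| u-\Jj_{0,h}u \|_{H^1(H^{-1}(\Omega))}^2,
\end{align*}
and the second summand is in turn controlled by $\| u-\Jj_{0,h}u \|_{L^2(H^{-1}(\Omega))}^2 + \| (u-\Jj_{0,h}u)' \|_{L^2(H^{-1}(\Omega))}^2$. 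The first and third of the bounds in Theorem~\ref{thm:int:simp} handle the $L^2(H^1_0(\Omega))$ term and the $\|(u-\Jj_{0,h}u)'\|_{L^2(H^{-1}(\Omega))}$ term (using $L^2(H^{-1}(\Omega)) \hookrightarrow$ is dominated by $L^2(L^2(\Omega))$ for the latter, or directly the second bound); the $L^2(H^{-1}(\Omega))$ term for $u-\Jj_{0,h}u$ itself is dominated by the already-established $L^2(H^1_0(\Omega))$ bound. So modulo regularity, each piece is $\OO(h)$.

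The substantive step is therefore to verify that
\begin{align*}
  u \in H^1(H^1_0(\Omega)) \cap H^2(H^{-1}(\Omega)) \cap H^2(L^2(\Omega)) \cap L^\infty(H^2(\Omega)),
\end{align*}
or at least enough of this list to feed Theorem~\ref{thm:int:simp}, from the hypotheses $u_0\in H^1_0(\Omega)\cap H^2(\Omega)$, $f\in H^1(L^2(\Omega))$, and $\Omega$ convex polygonal. First I would invoke standard parabolic regularity theory: for $f\in L^2(L^2(\Omega))$ and $u_0\in H^1_0(\Omega)$ one already has $u\in H^1(L^2(\Omega))\cap L^\infty(H^1_0(\Omega))\cap L^2(H^2(\Omega))$ on a convex domain (so $-\Delta u = f - \partial_t u \in L^2(L^2(\Omega))$ and elliptic regularity on the convex polygon gives the $H^2$ bound pointwise in $t$). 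To gain one more time derivative, differentiate the equation in time: $w:=\partial_t u$ solves the heat equation with right-hand side $\partial_t f \in L^2(L^2(\Omega))$ and initial datum $w(0)=\Delta u_0 + f(0) \in L^2(\Omega)$ — here Lemma~\ref{lem:ttrace} applied to $f\in H^1(L^2(\Omega))$ makes sense of $f(0)$, and $\Delta u_0\in L^2(\Omega)$ because $u_0\in H^2(\Omega)$. Applying Proposition~\ref{prop:regularitymodel} (or its $L^2$-data refinement mentioned in the text) to $w$ yields $\partial_t u \in L^2(H^1_0(\Omega))\cap H^1(H^{-1}(\Omega))$, i.e. $u\in H^1(H^1_0(\Omega))\cap H^2(H^{-1}(\Omega))$; and if $w\in H^1(L^2(\Omega))$ can be obtained as well (again via the $L^2$-data regularity for $w$), then $u\in H^2(L^2(\Omega))$. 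For $u\in L^\infty(H^2(\Omega))$ one needs $\Delta u = \partial_t u - f \in L^\infty(L^2(\Omega))$; this follows from $\partial_t u \in L^2(H^1_0(\Omega))\cap H^1(H^{-1}(\Omega))\hookrightarrow C(\overline J;L^2(\Omega))$ by Lemma~\ref{lem:ttrace} and from $f\in H^1(L^2(\Omega))\hookrightarrow C(\overline J;L^2(\Omega))$ by the same lemma, plus elliptic regularity on the convex polygon applied for a.e. (indeed every) $t$.

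The main obstacle I anticipate is bookkeeping the regularity chain cleanly — in particular making sure the compatibility condition $u_0\in H^1_0(\Omega)$ (so that $w(0)$ lies in $L^2(\Omega)$ rather than only in $H^{-1}(\Omega)$) is exactly what the hypotheses provide, and that the convexity of $\Omega$ is used precisely where the $H^2$ elliptic shift is invoked. There is also a mild subtlety in that Theorem~\ref{thm:int:simp} as stated requires the various Bochner norms on its right-hand sides to be finite, so one must confirm each is covered by the regularity just derived; e.g. $\| u\|_{H^1(H^1_0(\Omega))}$ needs $\partial_t u\in L^2(H^1_0(\Omega))$, which is the step relying on $\partial_t f\in L^2(L^2(\Omega))$ and $w(0)\in L^2(\Omega)$ together with convexity. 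Once all required norms are seen to be finite, the conclusion $\| u-\Jj_{0,h}u\|_{L^2(H^1_0(\Omega))\cap H^1(H^{-1}(\Omega))} = \OO(h)$ is immediate by collecting the estimates of Theorem~\ref{thm:int:simp}. I would not belabor the routine elliptic-and-parabolic regularity citations, simply pointing to standard references, and keep the proof short.
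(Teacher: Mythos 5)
Your proposal follows essentially the same route as the paper: the paper's proof simply asserts the standard parabolic regularity $u\in L^\infty(H^2(\Omega))$, $u'\in L^\infty(L^2(\Omega))\cap L^2(H^1_0(\Omega))$, $u''\in L^2(H^{-1}(\Omega))$ and then cites Theorem~\ref{thm:int:simp}. Your version merely spells out the regularity bootstrap (differentiating in time, compatibility of $w(0)=\Delta u_0+f(0)$, elliptic shift on the convex domain) that the paper leaves as ``well known,'' so the argument is correct and structurally identical.
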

\begin{proof}
  It is well known that under the given assumptions, there holds the parabolic regularity
  $u\in L^\infty(H^2(\Omega))$, $u'\in L^\infty(L^2(\Omega))\cap L^2(H^1_0(\Omega))$, and $u''\in L^2(H^{-1}(\Omega))$.
  According to Theorem~\ref{thm:int:simp}, we conclude the statement.
\end{proof}
\subsubsection{Approximating the heat equation in the least squares norm}
\begin{theorem}\label{thm:heat:ls}
  Suppose that $\tT_h$ is a simplicial mesh constructed from a tensor product $J_h\otimes \Omega_h$.
  If $u\in L^2(H^1_0(\Omega))\cap H^1(H^2(\Omega))\cap H^2(L^2(\Omega))\cap L^\infty(H^3(\Omega))$, then
  \begin{align*}
    \| (u-\Jj_{0,h}u, \nabla u - \Jj_h\nabla u) \|_U = \OO(h).
  \end{align*}
\end{theorem}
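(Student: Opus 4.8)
The plan is to expand $\|(u-\Jj_{0,h}u,\,\nabla u-\Jj_h\nabla u)\|_U^2$ into its four defining contributions
\begin{align*}
  &\|u-\Jj_{0,h}u\|_{L^2(H^1_0(\Omega))}^2 + \|u-\Jj_{0,h}u\|_{H^1(H^{-1}(\Omega))}^2\\
  &\qquad + \|\nabla u-\Jj_h\nabla u\|_{L^2(J\times\Omega)}^2 + \|\partial_t(u-\Jj_{0,h}u)-\div(\nabla u-\Jj_h\nabla u)\|_{L^2(J\times\Omega)}^2
\end{align*}
(the pair lies in $U$, so the left-hand side is well-defined), and to show each contribution is $\OO(h^2)$ by invoking the interpolation estimates of Theorems~\ref{thm:int:simp} and~\ref{thm:int:simp:2}; here $\Jj_h$ is applied componentwise, $\Jj_h\nabla u=(\Jj_h\partial_{x_1}u,\dots,\Jj_h\partial_{x_d}u)$. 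The only real work is to check that the (strong, but natural) regularity hypothesis on $u$ supplies admissible data for those theorems, and it will turn out that each of the four assumed norms of $u$ is needed.

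For the $u$-component, the first contribution is $\OO(h)$ by the first estimate of Theorem~\ref{thm:int:simp} once we know $u\in H^1(H^1_0(\Omega))$ and $u\in L^\infty(H^2(\Omega))$; the latter is immediate from $u\in L^\infty(H^3(\Omega))$, and for the former we only need $u'\in L^2(H^1_0(\Omega))$ since $u\in L^2(H^1_0(\Omega))$ is assumed. This follows by applying Lemma~\ref{lem:schwarz} to the spatial trace operator $\gamma\colon H^1(\Omega)\to L^2(\partial\Omega)$: it commutes with $\partial_t$, and $\gamma u$ vanishes in $L^2(J;L^2(\partial\Omega))$ because $u(t)\in H^1_0(\Omega)$ for a.e.\ $t$; hence $\gamma(u')=(\gamma u)'=0$, so $u'(t)$ has vanishing trace for a.e.\ $t$, and combined with $u'\in L^2(H^2(\Omega))$ this yields $u'\in L^2(H^1_0(\Omega))$. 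The second contribution splits, using $H^1_0(\Omega)\hookrightarrow L^2(\Omega)\hookrightarrow H^{-1}(\Omega)$, into $\|u-\Jj_{0,h}u\|_{L^2(H^{-1}(\Omega))}\lesssim\|u-\Jj_{0,h}u\|_{L^2(H^1_0(\Omega))}=\OO(h)$ and $\|(u-\Jj_{0,h}u)'\|_{L^2(H^{-1}(\Omega))}$, which is $\OO(h)$ by the second estimate of Theorem~\ref{thm:int:simp} because $u\in H^2(L^2(\Omega))\hookrightarrow H^2(H^{-1}(\Omega))$.

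For the flux component, Lemma~\ref{lem:schwarz} applied to the bounded operators $\partial_{x_i}\colon H^2(\Omega)\to H^1(\Omega)$ and $\partial_{x_i}\colon H^3(\Omega)\to H^2(\Omega)$ shows $\partial_{x_i}u\in H^1(H^1(\Omega))\cap L^\infty(H^2(\Omega))$ for each $i$, so each $\Jj_h\partial_{x_i}u$ is well-defined and Theorem~\ref{thm:int:simp:2} gives $\|\partial_{x_i}u-\Jj_h\partial_{x_i}u\|_{L^2(H^1(\Omega))}=\OO(h)$; summing over $i$ bounds the third contribution. For the fourth contribution I would use the triangle inequality
\begin{align*}
  \|\partial_t(u-\Jj_{0,h}u)-\div(\nabla u-\Jj_h\nabla u)\|_{L^2(J\times\Omega)}
  &\le \|(u-\Jj_{0,h}u)'\|_{L^2(L^2(\Omega))}\\
  &\qquad+\|\div(\nabla u-\Jj_h\nabla u)\|_{L^2(J\times\Omega)};
\end{align*}
the first term is $\OO(h)$ by the third estimate of Theorem~\ref{thm:int:simp} (using $u\in H^1(H^1_0(\Omega))\cap H^2(L^2(\Omega))\cap L^\infty(H^2(\Omega))$), and the second is bounded by $\sum_i\|\partial_{x_i}u-\Jj_h\partial_{x_i}u\|_{L^2(H^1(\Omega))}=\OO(h)$ from the previous step. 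Summing the four $\OO(h^2)$ bounds gives the claim.

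I expect the only genuine obstacle to be the regularity bookkeeping — in particular the observation that differentiation in time preserves the homogeneous Dirichlet condition, so that $u'\in L^2(H^1_0(\Omega))$, together with transferring Sobolev--Bochner regularity onto the components of $\nabla u$ so that $\Jj_h$ is applicable and first-order accurate. Everything else is a direct assembly of the interpolation machinery of Section~\ref{sec:spint}.
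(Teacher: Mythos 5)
Your proposal is correct and follows essentially the same route as the paper: expand the $U$-norm into its four contributions and bound each by Theorems~\ref{thm:int:simp} and~\ref{thm:int:simp:2}. The only difference is that you spell out the regularity bookkeeping (the trace argument giving $u'\in L^2(H^1_0(\Omega))$ and the use of Lemma~\ref{lem:schwarz} to transfer regularity to the components of $\nabla u$) which the paper's proof leaves implicit.
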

\begin{proof}
  The definition of the $U$-norm and Theorems~\ref{thm:int:simp} and~\ref{thm:int:simp:2} show
  \begin{align*}
    \| (u-\Jj_{0,h}u, \nabla u - \Jj_h\nabla u) \|_U
    &\leq  \| u - \Jj_{0,h}u \|_{L^2(H^1_0(\Omega))\cap H^1(H^{-1}(\Omega))}
    + \| \nabla u -\Jj_h\nabla u \|_{L^2(L^2(\Omega))}\\
    &\quad+ \| (u-\Jj_{0,h}u)' \|_{L^2(L^2(\Omega))}
    + \| \div (\nabla u - \Jj_h\nabla u) \|_{L^2(L^2(\Omega))}\\
    &\lesssim h \left( 
    \| u \|_{H^1(H^2(\Omega))} + \| u \|_{H^2(L^2(\Omega))} + \| u \|_{L^\infty(H^3(\Omega))}
    \right).
  \end{align*}
\end{proof}

With respect to the regularity requirements of the last theorem, we state the following.
\begin{proposition}\label{prop:heat:reg}
  Let $\Omega\subset\R^d$ with $\partial\Omega$ smooth, or, particularly, $\Omega\subset\R$ an interval.
  Then, if $u_0\in H^1_0(\Omega)\cap H^3(\Omega)$ and $f\in L^2(H^2(\Omega))\cap H^1(L^2(\Omega))$ and
  $f(0) + \Delta u_0 \in H^1_0(\Omega)$, it follows that
  the solution $u$ of the heat equation~\eqref{eq:model} fulfills
  $u\in L^2(H^1_0(\Omega))\cap H^1(H^2(\Omega))\cap H^2(L^2(\Omega))\cap L^\infty(H^3(\Omega))$.
\end{proposition}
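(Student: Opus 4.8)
The plan is to run the classical parabolic bootstrap: differentiate the equation once in time, apply $L^2$-maximal regularity to the time-differentiated problem, and then extract the spatial $H^3$-regularity from elliptic regularity, pointwise in time. The compatibility hypothesis $f(0)+\Delta u_0\in H^1_0(\Omega)$ is precisely what makes the initial datum of the time-differentiated problem admissible.

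First I would record the improved regularity that already holds for $f\in L^2(L^2(\Omega))$ and $u_0\in H^1_0(\Omega)$ when $\partial\Omega$ is smooth (or $\Omega$ is an interval): by standard parabolic regularity theory (see, e.g., \cite{Zeidler_90}) the solution $u$ satisfies
\begin{align*}
  u\in L^2(H^2(\Omega))\cap H^1(L^2(\Omega))\cap L^\infty(H^1_0(\Omega))
\end{align*}
with a corresponding a-priori bound; call this fact $(\star)$. By the anisotropic trace theorem, $L^2(H^2(\Omega))\cap H^1(L^2(\Omega))\hookrightarrow C(\overline J;H^1(\Omega))$, so $f\in C(\overline J;H^1(\Omega))$; since moreover $\Delta u_0\in H^1(\Omega)$ because $u_0\in H^3(\Omega)$, the hypothesis yields $v_0:=\Delta u_0+f(0)\in H^1_0(\Omega)$.

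The heart of the argument is to set $v:=\partial_t u$ and to show that $v$ is the weak solution of
\begin{align*}
  \partial_t v-\Delta v=\partial_t f\ \text{ in }J\times\Omega,\qquad v=0\ \text{ on }J\times\partial\Omega,\qquad v(0)=v_0.
\end{align*}
I expect this to be the main obstacle: one has to justify differentiating the equation in time \emph{and} identify the initial trace of $v$ as $\Delta u_0+f(0)$. The standard device is to work with the temporal difference quotients $(\delta_\tau u)(t):=\tau^{-1}(u(t+\tau)-u(t))$, which solve the heat equation on $(0,T-\tau)$ with right-hand side $\delta_\tau f$ and initial value $\tau^{-1}(u(\tau)-u_0)$; using $(\star)$ and $f\in H^1(L^2(\Omega))$ one obtains $\tau$-uniform energy estimates, passes to a weak limit, and identifies the limiting initial datum through the semigroup representation $u(t)=e^{t\Delta}u_0+\int_0^t e^{(t-s)\Delta}f(s)\,ds$ (or, equivalently, one argues on the level of Galerkin approximations as in the classical proofs of higher parabolic regularity). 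In either case $\partial_t f\in L^2(L^2(\Omega))$ by hypothesis and $v(0)=v_0\in H^1_0(\Omega)$ by the previous step, so the data of $v$ are admissible for $(\star)$.

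Applying $(\star)$ to $v$ then yields $v\in L^2(H^2(\Omega))\cap H^1(L^2(\Omega))\cap L^\infty(H^1_0(\Omega))$, i.e., $u\in H^1(H^2(\Omega))\cap H^2(L^2(\Omega))$ together with $\partial_t u\in L^\infty(H^1_0(\Omega))$. It remains to upgrade the spatial regularity of $u$ itself. For a.e.\ $t\in J$ the equation reads $-\Delta u(t)=f(t)-\partial_t u(t)$ in $\Omega$ with $u(t)\in H^1_0(\Omega)$, and the right-hand side lies in $L^\infty(H^1(\Omega))$ since $f\in C(\overline J;H^1(\Omega))$ and $\partial_t u\in L^\infty(H^1_0(\Omega))$. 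Invoking $H^3$-regularity of the homogeneous Dirichlet problem for $-\Delta$ — valid because $\partial\Omega$ is smooth, and trivial for an interval, where this is the ODE $-u''=f-\partial_t u$ with homogeneous boundary values — one gets $\|u(t)\|_{H^3(\Omega)}\lesssim\|f(t)-\partial_t u(t)\|_{H^1(\Omega)}$ uniformly in $t$, hence $u\in L^\infty(H^3(\Omega))$. Since $u\in L^2(H^1_0(\Omega))$ is already contained in $(\star)$, collecting the four memberships completes the proof.
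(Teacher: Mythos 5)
Your proof is correct, but it takes a genuinely different route from the paper's. The paper disposes of everything except $L^\infty(H^3(\Omega))$ by simply citing the classical higher parabolic regularity $u^{(k)}\in L^2(H^{4-2k}(\Omega))$, $k=0,1,2$ --- which is exactly what the hypotheses, including the compatibility condition $f(0)+\Delta u_0\in H^1_0(\Omega)$, are tailored to deliver --- and then obtains $u\in L^\infty(H^3(\Omega))$ by a soft functional-analytic argument: for $|\alpha|\le 3$ the derivative $D^\alpha$ is bounded from $H^4(\Omega)$ to $H^1(\Omega)$ and from $H^2(\Omega)$ to $\widetilde H^{-1}(\Omega)$, so Lemma~\ref{lem:schwarz} gives $D^\alpha u\in L^2(H^1(\Omega))\cap H^1(\widetilde H^{-1}(\Omega))$ and the Gelfand-triple trace embedding of Lemma~\ref{lem:ttrace} yields $D^\alpha u\in C(\overline J;L^2(\Omega))$. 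You instead reprove the hard part via the standard time-differentiation bootstrap (which is in fact how the cited regularity theorem is established), thereby making explicit where the compatibility condition enters, and you extract $L^\infty(H^3(\Omega))$ from pointwise-in-time elliptic regularity applied to $-\Delta u(t)=f(t)-\partial_t u(t)$. Your route never needs $u\in L^2(H^4(\Omega))$, but it does require $f\in L^\infty(H^1(\Omega))$ (via the anisotropic trace theorem) and an $H^3$ elliptic shift theorem on smooth domains, both of which the paper's argument keeps hidden inside the cited space-time result and replaces by the trace-lemma step. The one place where your write-up is only a sketch is the rigorous justification that $v=\partial_t u$ solves the differentiated problem with initial value $f(0)+\Delta u_0$; you correctly flag this as the technical crux, and the difference-quotient/Galerkin argument you outline is the standard one, so this is not a gap in substance.
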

\begin{proof}
  It is well known that under the given assumptions, there holds the parabolic regularity
  $u^{(k)}\in L^2(H^{4-2k}(\Omega))$, $k=0,1,2$. It remains to show that $u\in L^\infty(H^3(\Omega))$.
  To that end, consider spatial partial derivatives $D^\alpha$ up to third order $|\alpha|\leq 3$.
  It is clear that $D^\alpha:H^4(\Omega)\rightarrow H^1(\Omega)$ as well as
  $D^\alpha: H^2(\Omega)\rightarrow \widetilde H^{-1}(\Omega)$ are bounded and linear operators.
  Hence, due to Lemma~\ref{lem:schwarz}, $D^\alpha u\in L^2(H^1(\Omega))\cap H^1(\widetilde H^{-1}(\Omega))$,
  and hence also $D^\alpha u\in C(\overline J;L^2(\Omega))$ due to Lemma~\ref{lem:ttrace}.
  We conclude that $u\in C(\overline J; H^3(\Omega))$.
\end{proof}
\section{Numerical results}\label{sec:examples}
In this section we investigate several examples for $d=1$ (Section~\ref{sec:examples2D}) and $d=2$
(Section~\ref{sec:examples3D}).
For all examples we use $J=(0,1)$. We define the estimator
\begin{align*}
  j(u_h,\ssigma_h) =: \eta^2 = \sum_{K\in\tT_h} \eta(K)^2
\end{align*}
where the local error indicators are given by
\begin{align*}
  \eta(K)^2 := \norm{\ssigma_h-\nabla u_h}{L^2(K)}^2 + \norm{\partial_t u_h-\div\ssigma_h-f}{K}^2 
  + \norm{u_h(0)-u_0}{\partial K\cap \{0\}\times \Omega}^2.
\end{align*}
Our adaptive algorithm uses the D\"orfler criterion to mark elements for refinement, i.e.,
find a (minimal) set of elements $\mathcal{M}\subset \tT_h$ such that
\begin{align*}
  \theta \eta^2 \leq \sum_{K\in\mathcal{M}} \eta(K)^2.
\end{align*}
Throughout we use the parameter $\theta = 1/4$ in the case of adaptive refinements.
If an element $K$ is marked for refinement, i.e., $K\in\mathcal{M}$,
it will be (iteratively) subdivided into $2^{d+1}$ son elements using newest vertex bisection (NVB).
In particular, uniform refinement means that each element is divided into $2^{d+1}$ son elements.

In the figures we visualize convergence rates with triangles where the (negative) slope is indicated by a number
besides the triangle. We plot different estimator and respective error quantities over the number of degrees of freedom
$N$. For uniform refinement we have that $h \simeq N^{-1/(d+1)}$.

\subsection{Examples in 1+1 dimensions}\label{sec:examples2D}
Throughout this section we consider problems where $\Omega = (0,1)$.
The initial mesh of the space-time cylinder $J\times \Omega$ consists of four triangles with equal area.

\subsubsection{Example~1}\label{sec:ex1}
For the first example we consider the smooth manufactured solution
\begin{align*}
  u(t,x) = \cos(\pi \,t) \sin(\pi\,x).
\end{align*}
The data $f$ and $u_0$ are computed thereof.
Since the solution is smooth we expect that the overall error converges at a rate $\OO(h)$ which can be observed in
Figure~\ref{fig:ex1}. 
We also see that the overall estimator converges at the same rate. 
Moreover, we observe that the error between $u$ and the approximation $u_h$ at times $t=0,1$ in the $L^2(\Omega)$ norm
and the error between $u$ and $u_h$ in the $L^2(J\times \Omega)$ norm converge at the higher rate $\OO(h^2)$.
\begin{figure}[htb]
  \begin{center}
    \includegraphics[width=0.7\textwidth]{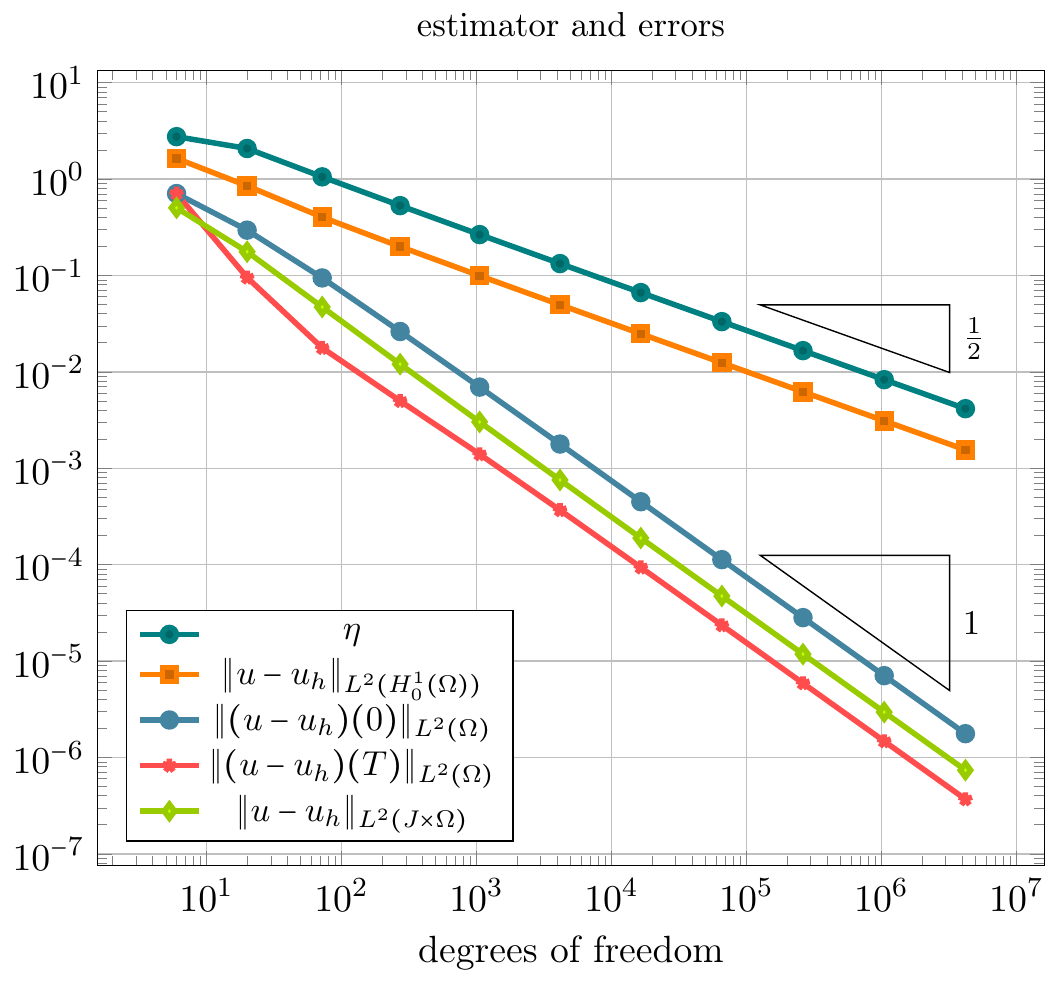}
  \end{center}
  \caption{Estimator and errors for the problem from Section~\ref{sec:ex1}.}
  \label{fig:ex1}
\end{figure}
\subsubsection{Example~2}\label{sec:ex2}

\begin{figure}[htb]
  \begin{center}
    \includegraphics[width=0.7\textwidth]{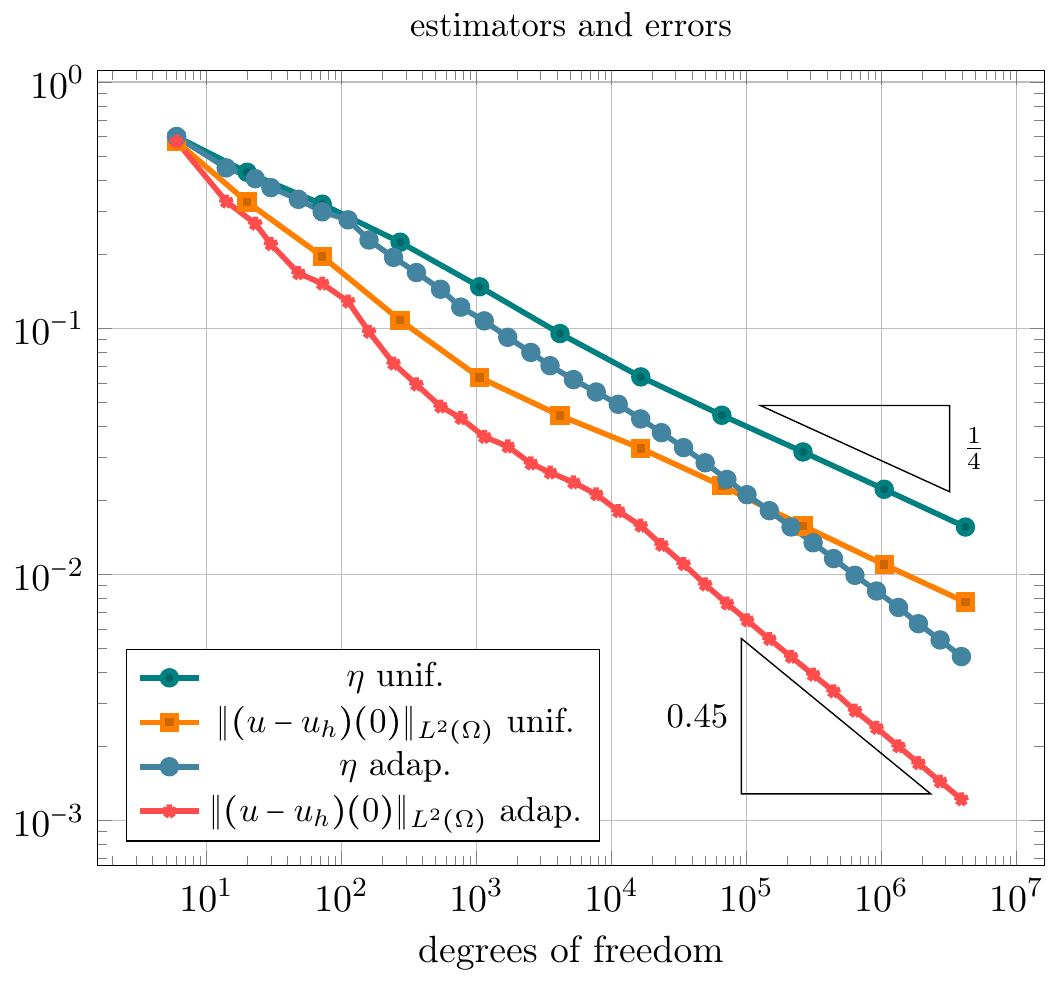}
  \end{center}
  \caption{Estimator and errors for the problem from Section~\ref{sec:ex2}.}
  \label{fig:ex2}
\end{figure}

In this case we choose a constant source $f(t,x) = 1$ and as initial data the ``hat-function''
\begin{align*}
  u_0(x) = 1-2\left\lvert x-\frac12\right\rvert \quad x\in \Omega=(0,1).
\end{align*}
The overall estimator and the error in the initial data is presented in Figure~\ref{fig:ex2}.
It can be observed that uniform refinement leads to a rate of $1/4$ with respect to the overall degrees of freedom both
for the estimator and the error in the initial data whereas in the case of adaptive refinements we obtain a much better
rate of approximately $0.45$.

\subsubsection{Example~3}\label{sec:ex3}

\begin{figure}[htb]
  \begin{center}
    \includegraphics[width=0.7\textwidth]{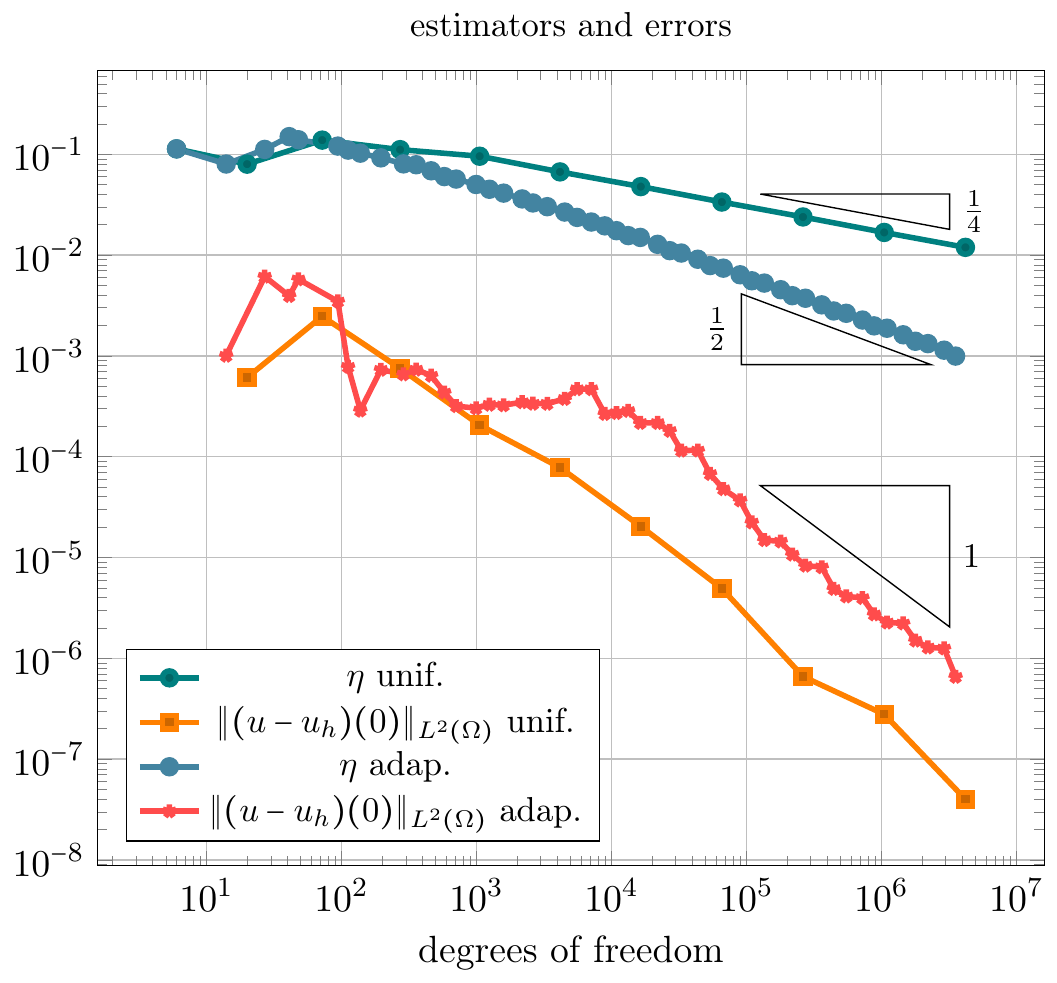}
  \end{center}
  \caption{Estimator and errors for the problem from Section~\ref{sec:ex3}.}
  \label{fig:ex3}
\end{figure}

\begin{figure}[htb]
  \begin{center}
    \includegraphics[width=0.33\textwidth]{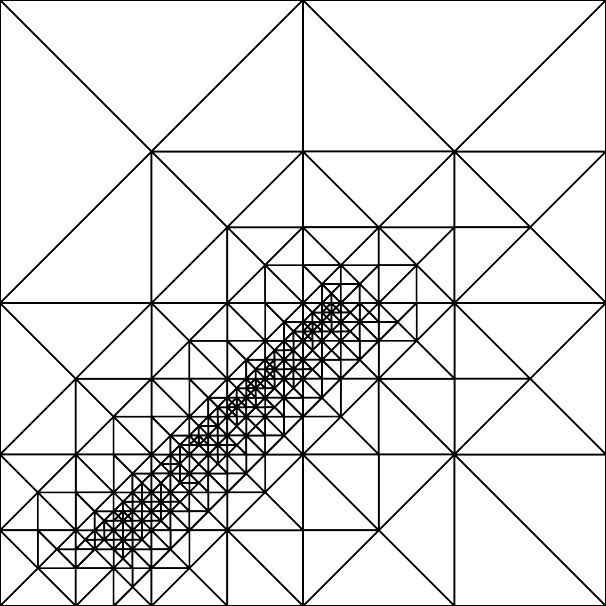}
    \includegraphics[width=0.33\textwidth]{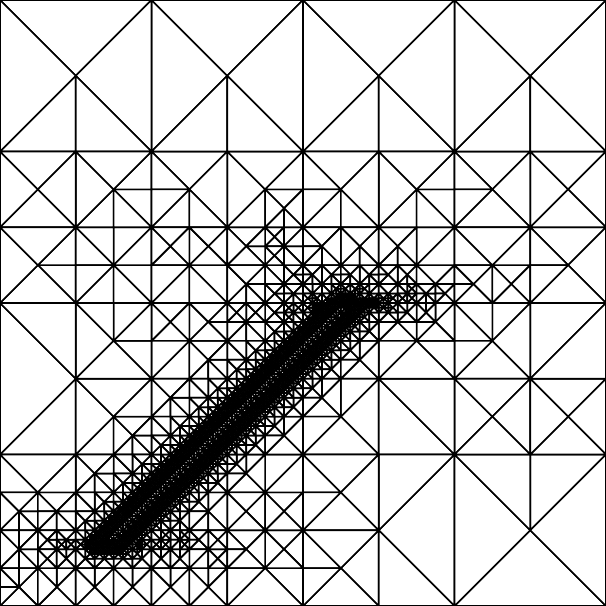}
  \end{center}
  \caption{Adaptively generated meshes with $569$ (left) resp. $13291$ (right) elements for the problem from
  Section~\ref{sec:ex3}. The vertical axis corresponds to the time coordinate.}
  \label{fig:ex3meshSol}
\end{figure}

In this example we consider a problem with homogeneous initial data and
\begin{align*}
  f(t,x) = \begin{cases}
    1 & (t,x) \in \{ (s,y) \in (1/10,1/2)\times (0,1) \,:\, y-1/20\leq s \leq y-1/10 \} \\
    0 & \text{else}
  \end{cases}.
\end{align*}
This function corresponds to a source that is turned on at time $t=0.1$ and turned off at $t=0.5$ and moves with a
constant speed to the right.
The exact solution is not known and in Figure~\ref{fig:ex3} we compare the overall estimator and the error in the
initial time in the cases of uniform and adaptive mesh-refinement.
We observe that in the uniform case we obtain a reduced rate of $1/4$ whereas in the adaptive case a rate of $1/2$ is
recovered for the overall estimator. 
In both cases the error in the initial data converges at the optimal rate.

Figure~\ref{fig:ex3meshSol} shows two examples of meshes generated by the adaptive algorithm.
Stronger refinements around the support of $f$ can be observed.

\subsubsection{Example~4}\label{sec:ex4}

\begin{figure}[htb]
  \begin{center}
    \includegraphics[width=0.7\textwidth]{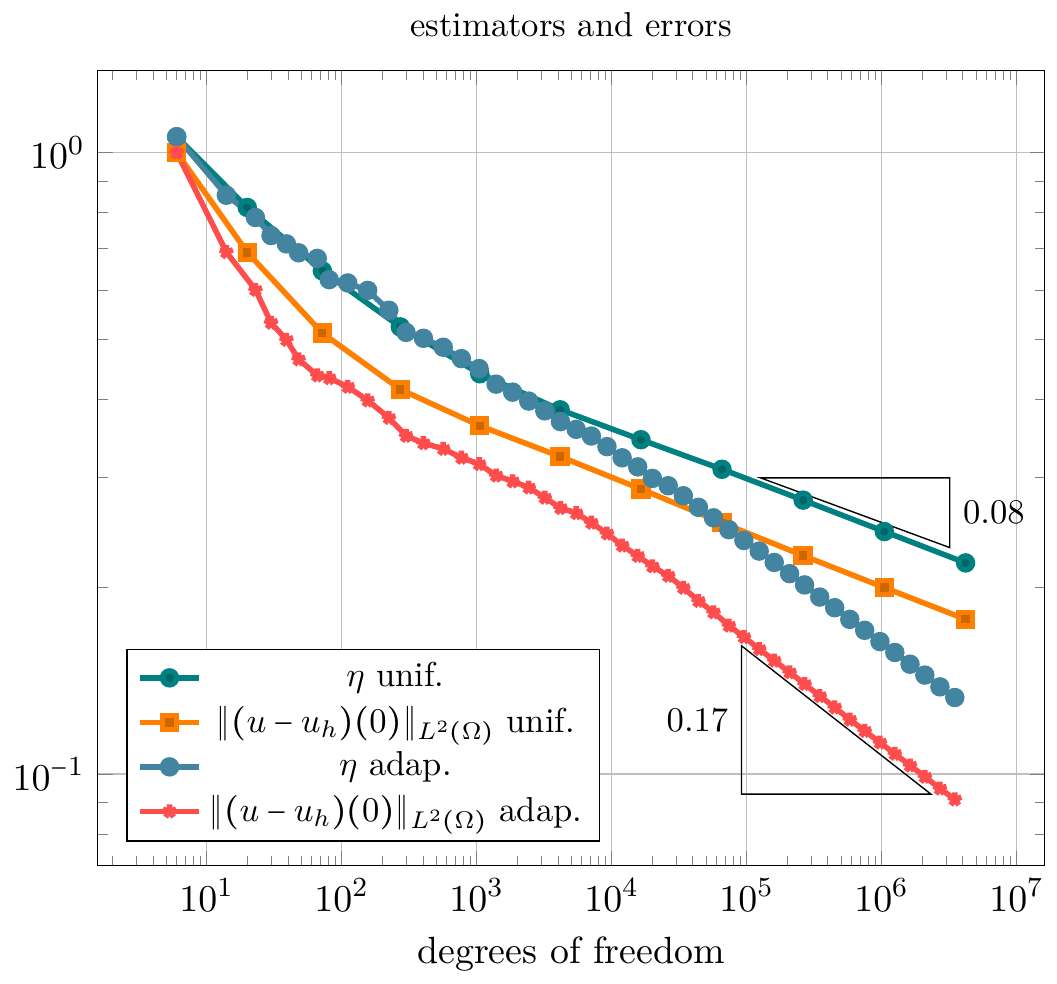}
  \end{center}
  \caption{Estimator and errors for the problem from Section~\ref{sec:ex4}.}
  \label{fig:ex4}
\end{figure}

In this example we set $f(t,x)=2$, $u_0(x) = 1$. Again, the exact solution is not known to us in closed form. 
Note that $u_0$ is regular but does not satisfy homogeneous boundary conditions, i.e., $u_0\in H^1(\Omega)\setminus
H_0^1(\Omega)$.

Figure~\ref{fig:ex4} visualizes the overall estimator and the error at the initial time.
For uniform refinements we observe a rate of $0.08$ and for adaptive refinements the rate is approximately doubled.
We observe a similar behavior for initial data with $u_0\in L^2(\Omega)\setminus H^1(\Omega)$ which is also used
in~\cite[Section~6.3.4]{Andreev_13}. 
We note that reduced rates are also observed in~\cite{Andreev_13}.
\subsection{Examples in 2+1 dimensions}\label{sec:examples3D}

\subsubsection{Example~1}\label{sec:3d:ex1}

\begin{figure}[htb]
  \begin{center}
    \includegraphics[width=0.7\textwidth]{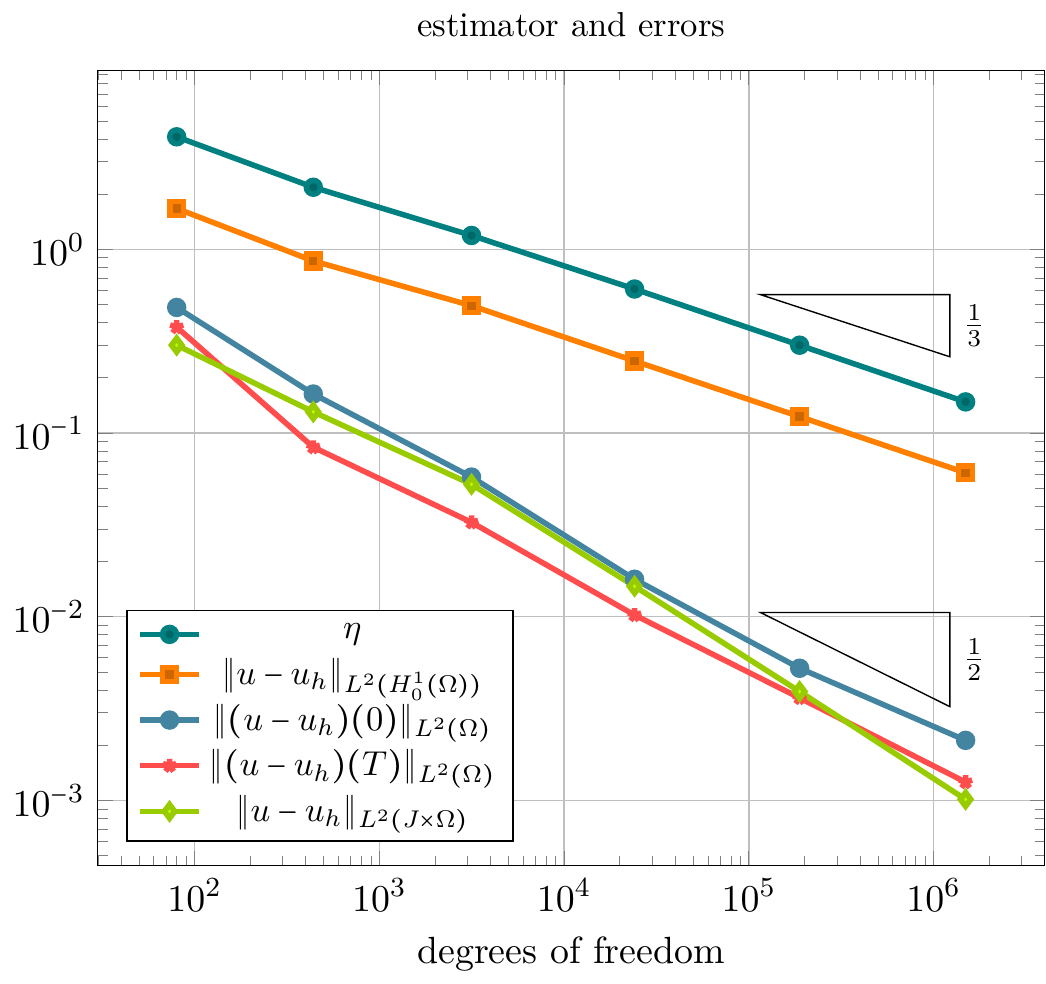}
  \end{center}
  \caption{Estimator and errors for the problem from Section~\ref{sec:3d:ex1}.}
  \label{fig:3D:ex1}
\end{figure}

We consider the domain $\Omega = (0,1)^2$ and the manufactured solution
\begin{align*}
  u(t,x,y) = \cos(\pi t)\sin(\pi x)\sin(\pi y) \quad\text{for } (t,x,y)\in J\times \Omega.
\end{align*}
The data $f$ and $u_0$ are computed thereof.
We note that the solution is smooth and thus we expect for uniform refinements
convergence rates of order $h$ where $h\simeq N^{-1/3}$ and $N$ denotes the overall degrees of freedom.
Figure~\ref{fig:3D:ex1} displays the errors and estimator. One observes the optimal behavior for the overall estimator
and error as well as higher rates of the initial and end error as well as the error in the $L^2$ norm of the space-time
cylinder. We see a behavior of $N^{-1/2}$ which corresponds to $h^{3/2}$.

\subsubsection{Example~2}\label{sec:3d:ex2}

\begin{figure}[htb]
  \begin{center}
    \includegraphics[width=0.7\textwidth]{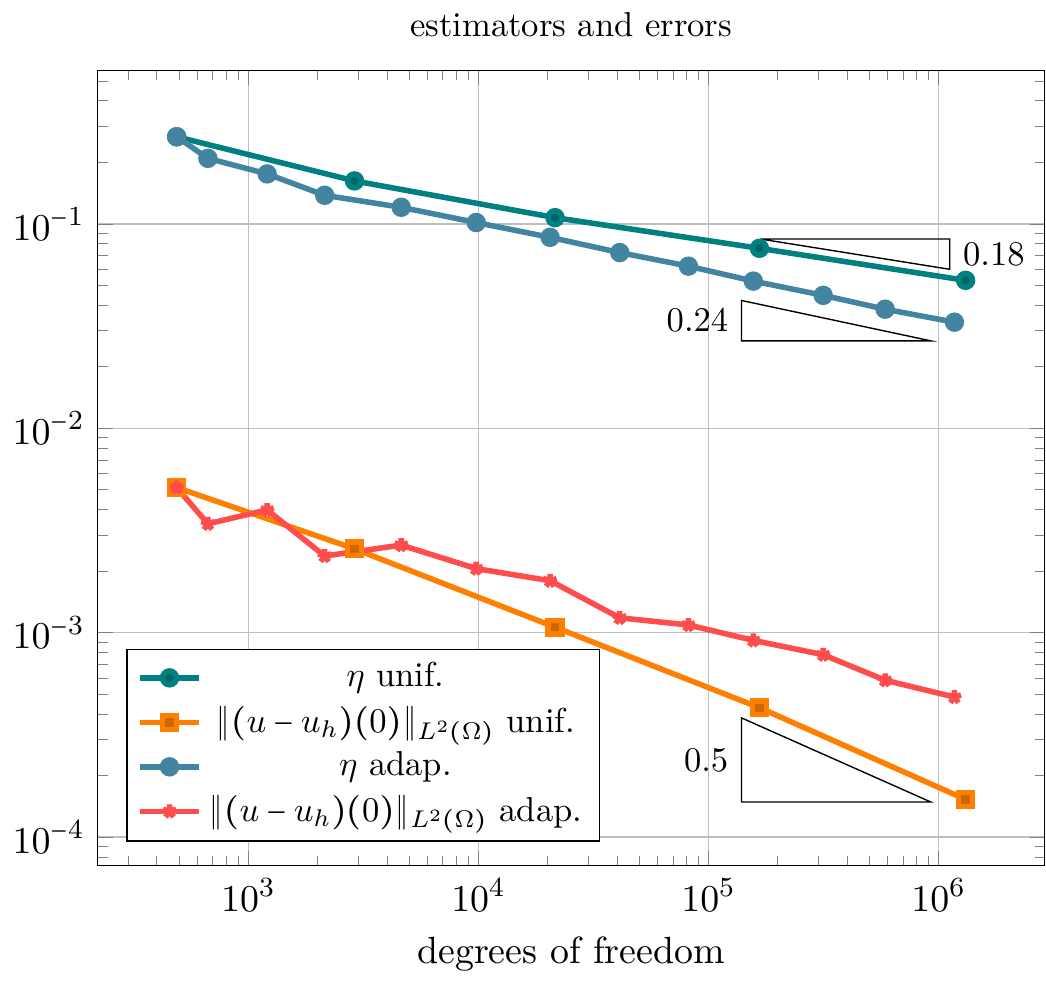}
  \end{center}
  \caption{Estimator and errors for the problem from Section~\ref{sec:3d:ex2}.}
  \label{fig:3D:ex2}
\end{figure}

\begin{figure}[htb]
  \begin{center}
    \includegraphics[width=0.33\textwidth]{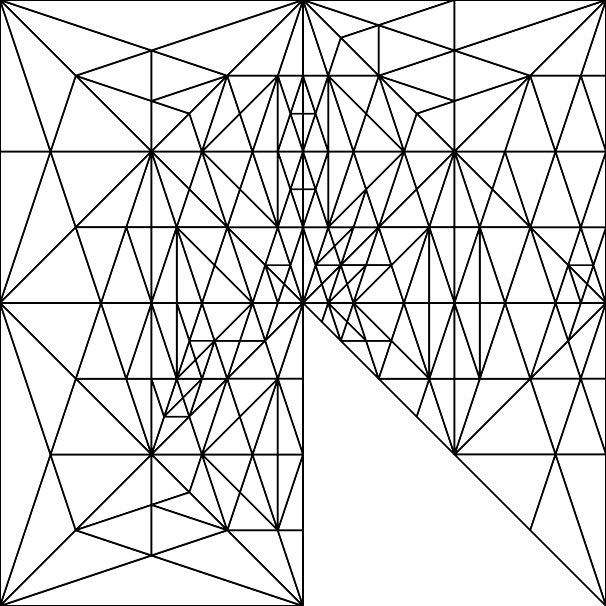}
    \includegraphics[width=0.33\textwidth]{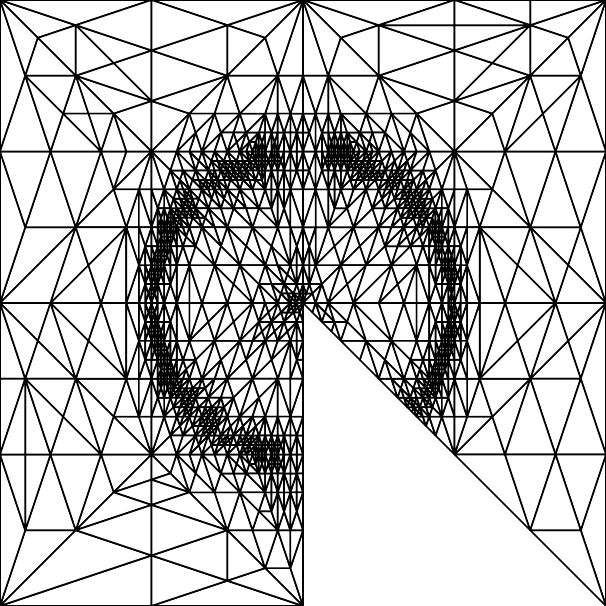}
  \end{center}
  \caption{Space-time meshes restricted to $t=0$ (left) resp. $t=1$ (right) for the problem from
  Section~\ref{sec:3d:ex2}.
  The space-time mesh consists of 149478 elements. The number of boundary elements (triangles) in the left plot is 269
  and in the right plot 1932.}
  \label{fig:3D:ex2mesh}
\end{figure}

For this example we consider $\Omega = (-1,1)^2\setminus \mathrm{conv}\{(0,0),(0,-1),(1,-1)\}$ and $u_0(x,y) = 0$.
The source $f$ is given by
\begin{align*}
  f(t,x,y) = \begin{cases}
    t & \text{if } \sqrt{x^2+y^2}<1/2, \\
    0 & \text{else}.
  \end{cases}
\end{align*}    
Since $\Omega$ has a reentrant corner at the origin we expect that the unknown solution has reduced regularity at this corner.
Figure~\ref{fig:3D:ex2} shows the error in the initial time and the overall estimator for both uniform and adaptive
refinements.
We observe that uniform refinements lead to rates for the overall estimator of approximately $0.2$ whereas for adaptive
refinements we only see a slightly improved rate of approximately $0.24$.

Figure~\ref{fig:3D:ex2mesh} visualizes the boundary of a space-time mesh obtained by the adaptive algorithm at $t=0$
(left plot) and $t=1$ (right plot).
One sees that for $t=0$, where $f$ vanishes, only a few number of elements have been refined.
Contrary, when $t=1$ (right plot), where $f(1,x,y) = 1$ if $x^2+y^2<1/4$, one observes strong refinements close to the
boundary of the support of $f(1,\cdot,\cdot)$ but also towards the reentrant corner.

\subsubsection{Example~3}\label{sec:3d:ex3}

\begin{figure}[htb]
  \begin{center}
    \includegraphics[width=0.7\textwidth]{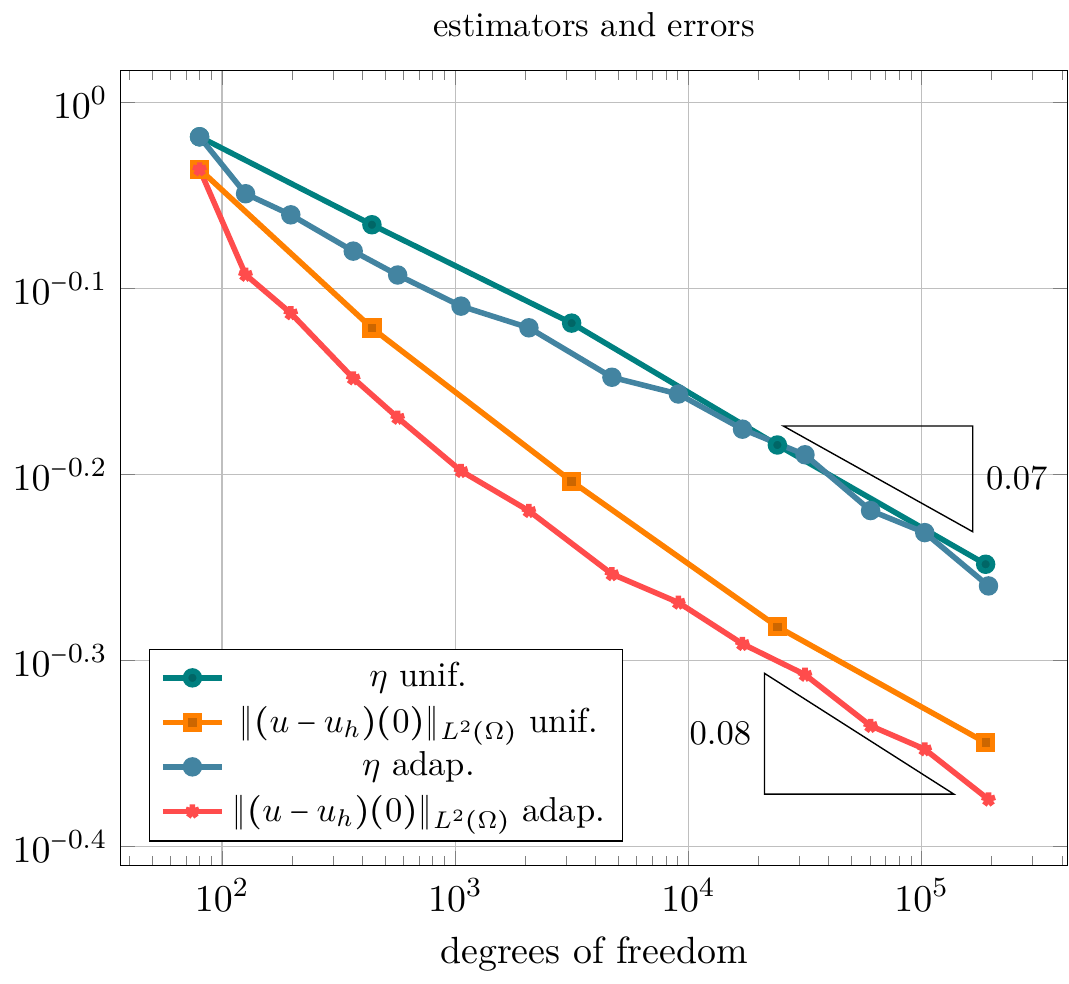}
  \end{center}
  \caption{Estimator and errors for the problem from Section~\ref{sec:3d:ex3}.}
  \label{fig:3D:ex3}
\end{figure}

For this problem we set $f(t,x,y) = 0$, $u_0(x,y) = 1$ and $\Omega = (0,1)^2$.
Figure~\ref{fig:3D:ex3} shows the error in the initial time and the overall estimator for both uniform and adaptive
refinements.
We observe a rate of approximately $0.07$ for uniform refinement, which is not considerably improved using adaptive
refinement.

\bibliographystyle{abbrv}
\bibliography{literature}
\end{document}